\definecolor{MyDarkBlue}{rgb}{0,0.10,0.60}
\definecolor{BrickRed}{rgb}{0.65,0.08,0}
\newcommand{\eqn}[1]{\begin{equation} #1 \end{equation}}
\newcommand{\eqan}[1]{\begin{align} #1 \end{align}}
\newtheorem{theorem}{Theorem}[section]
\newtheorem{lemma}[theorem]{Lemma}
\newtheorem{proposition}[theorem]{Proposition}
\theoremstyle{remark}
\newtheorem*{remark}{Remark}
\def \b {\beta}
\def \D {\Delta}
\def \e {\varepsilon}
\def \o {\omega}
\def \t {\tau}
\def \s {\sigma}
\def \g {\gamma}
\def \G {\Gamma}
\def \cF {\mathcal{F}}
\def \E {\mathbb E}
\def \ev {\mathcal E}
\def \ov {\mathcal O}
\def \N {\mathbb N}
\def \SS {\mathcal{S}}
\def \RE {\mathcal{R}_\ev}
\def \RO {\mathcal{R}_\ov}
\def \LL {\mathcal{L}}
\def \ninf {\nu \to \infty}
\def \liminfnu {\liminf_{\s \to \infty}}
\def \txy {T_{\ov \to \ev}}
\def \tyx {T_{\ev\to \ov}}
\def \uy {U_\ev}
\def \sy {S_\ev}
\def \stl {\geq_{{\rm st}}}
\def \ed {\,{\buildrel d \over =}\,}
\def \pap {\partial^+}
\def \pam {\partial^-}
\def \gs {L}
\newcommand{\indi}[1]{{\rm I}_{\{ #1 \}}}
\newcommand{\pr}[1]{\mathbb P \left( #1 \right)}
\definecolor{lg}{rgb}{0.6,0.6,0.6}
\def \odda {{\color{lg} \bullet}}
\def \oddi {\circ}
\def \evena {\bullet}
\def \eveni {{\color{lg} \circ}}
\begin{document}

\title{Delay performance in random-access grid networks}
\author{A. Zocca\thanks{Department of Mathematics and Computer Science, Eindhoven University of Technology, P.O. Box 513, 5600 MB Eindhoven, The Netherlands.} \and S.C. Borst\footnotemark[1] \thanks{Alcatel-Lucent Bell Labs, P.O. Box 636, Murray Hill, NJ 07974-0636, USA.} \and J.S.H. van Leeuwaarden\footnotemark[1] \and F.R. Nardi\footnotemark[1] }
\date{}

\maketitle

\begin{abstract}
We examine the impact of torpid mixing and meta-stability issues on the delay performance in wireless random-access networks.
Focusing on regular meshes as prototypical scenarios, we show that the mean delays in an $L\times L$ toric grid with normalized load~$\rho$ are of the order $(\frac{1}{1 - \rho})^L$. This superlinear delay scaling is to be contrasted with the usual linear growth of the order $\frac{1}{1 - \rho}$ in conventional queueing networks. The intuitive explanation for the poor delay characteristics is that (i) high load requires a high activity factor, (ii) a high activity factor implies extremely slow transitions between dominant activity states, and (iii) slow transitions cause starvation and hence excessively long queues and delays.

Our proof method combines both renewal and conductance arguments. A critical ingredient in quantifying the long transition times is the derivation of the communication height of the uniformized Markov chain associated with the activity process. We also discuss connections with Glauber dynamics, conductance and mixing times.
Our proof framework can be applied to other topologies as well, and is also relevant for the hard-core model in statistical physics and the sampling from independent sets using single-site update Markov chains.
\end{abstract}

\noindent \textbf{Keywords:} wireless random-access networks; grid topologies, delay scaling; hitting times; conductance; mixing times.

\section{Introduction}

Emerging wireless mesh networks typically lack any centralized control
entity, and instead vitally rely on a distributed mechanism for
regulating the access of the various nodes to the shared medium.
A popular mechanism for distributed medium access control is provided
by the so-called Carrier-Sense Multiple-Access (CSMA) protocol,
various incarnations of which are implemented in IEEE 802.11 networks.
In the CSMA protocol each node attempts to access the medium after
a random back-off time, but nodes that sense activity of interfering
nodes freeze their back-off timer until the medium is sensed idle.

While the CSMA protocol is fairly easy to understand at a local level,
the interaction among interfering nodes gives rise to quite intricate
behavior and complex throughput characteristics on a macroscopic scale.
In recent years relatively parsimonious models have emerged that
provide a useful tool in evaluating the throughput characteristics
of CSMA-like networks \cite{BKMS87,DDT07,DT06,WK05}.
These models essentially assume that the interference constraints can
be represented by a general conflict graph, and that the various nodes
activate asynchronously whenever none of their neighbors are presently
active.
Although such a representation of the IEEE 802.11 back-off mechanism
is not as detailed as in the landmark work of Bianchi~\cite{Bianchi00},
the general conflict graph provides far greater versatility
in describing a broad range of topologies.
Experimental results in Liew {\em et al.}~\cite{LKLW08} demonstrate
that these models, while idealized, yield throughput estimates that
match remarkably well with measurements in actual IEEE 802.11 networks.

Besides long-term throughput estimates, the above-described models also
provide valuable qualitative insights in transient characteristics.
In particular, the models reveal that the activity process of the
various nodes may exhibit slow mixing and meta-stable behavior due to
slow transitions between dominant states~\cite{LK11,ZBvL12}.
As a result, individual nodes will experience long sequences of
transmissions in rapid succession, interspersed with extended periods
of starvation.
Since the above-described models assume saturated buffers, they do not
explicitly capture the effect of temporal starvation on queue lengths
and delays though.
In the present paper we therefore augment the basic model with
queueing dynamics, and analyze average steady-state delays in order
to examine the performance repercussions of temporal starvation.

Only few results for average steady-state delays in random-access
networks have been obtained so far.
An interesting paper by Shah {\em et al.}~\cite{STT11} showed that
low-complexity schemes cannot achieve low delay in arbitrary topologies
(unless P equals NP), since that would imply that certain NP-hard
problems could be solved efficiently.
However, the notion of delay in~\cite{STT11} is a transient one,
and it is not exactly clear what the implications are for the
average steady-state delay in specific networks, if any.
Jiang {\em et al.}~\cite{JLNSW11} derived {\em upper\/} bounds
for the average steady-state delay.
The bounds show that for low load the delay only grows polynomially
with the number of nodes in bounded-degree graphs.
Subramanian \& Alanyali~\cite{SA11} presented similar upper bounds for
bounded-degree graphs with low load based on analysis of neighbor sets
and stochastic coupling arguments.

Focusing on regular meshes as prototypical scenarios, we establish
that the average steady-state delays in an $L\times L$ toric grid with
normalized load~$\rho$ scale as $(\frac{1}{1 - \rho})^L$,
in contrast to the typical linear growth of the order $\frac{1}{1 - \rho}$ in
conventional queueing networks.
The intuitive explanation for the adverse delay scaling is that
(i) high load requires a high activity factor,
(ii) a high activity factor implies extremely slow transitions between
dominant activity states,
and (iii) slow transitions cause starvation and hence excessively long
queues and delays.

Our novel proof method combines both renewal and conductance arguments.
A critical role in determining the long transition times in (ii) is
played by the derivation of the communication height of the
uniformized Markov chain associated with the activity process.
As a side-result, we gain valuable insight in the exponentially small
conductance of Glauber dynamics on the independent sets in the $L \times L$
grid, and hence in the resulting exponentially large mixing time.
Our proof methodology can be applied to other graphs as well, and is also
relevant for the hard-core model in statistical physics~\cite{vdBS94}
and the sampling from independent sets using single-site update Markov
chains in the theoretical computer science literature~\cite{B99,G08,R06}.

The remainder of the paper is organized as follows.
In Section~\ref{mode} we present a detailed model description.
In Section~\ref{overview} we give an overview of the main results
for the delays, transition times, communication height and mixing times, 
and provide heuristic interpretations and high-level sketches of the
proof arguments.
In Section~\ref{sec:proofrenewal} we establish the intimate connection
between long transition times and long delays.
Section~\ref{sec:ch} is devoted to the derivation of the communication
height, which plays a pivotal role in characterizing the slow
transitions between dominant activity states, proved in Section~\ref{sec:thm31}. 
In Section~\ref{concl} we make some concluding remarks and sketch some
directions for further research. 

\section{Model description}
\label{mode}

In this section we present a detailed description of the model for
random-access networks with queueing dynamics.
In Subsection~\ref{general} we specify the network dynamics for
arbitrary conflict graphs with node-specific arrival, transmission,
and activation rates.
To facilitate a more insightful analysis, we introduce
in Subsection~\ref{sec:sym} several symmetry assumptions, amounting to
toric grids with homogeneous arrival, transmission, and activation rates.
This symmetric network will be the focus throughout the paper.
In Subsection~\ref{sub:stabcond} we establish the stability conditions for the network.

\subsection{Network dynamics}
\label{general}

Consider a network of $N$~nodes sharing a wireless medium.
The network is represented by an undirected graph $G = (V, E)$,
called \textit{conflict graph} or \textit{interference graph},
where the set of vertices $V = \{1,\ldots, N\}$ correspond to the
various nodes and the set of edges $E \subseteq V \times V$ indicate
which pairs of nodes interfere.
Nodes that are neighbors in the conflict graph are prevented from
simultaneous activity, and thus the independent sets of~$G$ (sets of
vertices not sharing any edge) correspond to the feasible joint
activity states of the network.
A node is said to be \textit{blocked} whenever the node itself or any
of its neighbors is active, and \textit{unblocked} otherwise. 
Denote by $\Omega \subseteq \{0, 1\}^N$ the collection of all
independent sets of~$G$. Packets arrive at node~$i$ as a Poisson process of rate~$\lambda_i$.
The packet transmission times at node~$i$ are independent
and exponentially distributed with mean $1 / \mu_i$. Let $X(t) \in \Omega$ represent the joint activity state of the network at time~$t$, with $X_i(t)$ indicating whether node~$i$ is active at time~$t$ or not. Let $\{Q(t)\}_{t \geq 0}$ be the joint queue-length process, with $Q_i(t)$ the number of packets waiting at node~$i$ at time~$t$ (excluding the packet that may be in the process of being transmitted).

The various nodes share the medium according to a random-access mechanism.
When a node ends a packet transmission, it either starts a back-off
period with probability~$p_i$ before the next packet transmission,
or immediately starts the next packet transmission otherwise.
The back-off periods of node $i$ are independent and exponentially
distributed with mean $1/\nu_i$.
The back-off period of a node is suspended whenever it becomes blocked
by activity of any of its neighbors, and only resumes once the node
becomes unblocked again.

Under this random-access mechanism, the process
$\{(X(t), Q(t))\}_{t \geq 0}$ evolves as a continuous-time Markov
process with state space $\Omega \times {\mathbb N}^N$.
Transitions from a state $(X, Q)$ to $(X, Q + e_i)$ due to arrivals
occur at rate~$\lambda_i$, transitions due to activations from
a state $(X, Q)$ with $X_i = 0$ and $X_j = 0$ for all neighbors~$j$
of node~$i$ to $(X + e_i, Q - e_i \indi{Q_i > 0})$ occur at rate~$\nu_i$,
transitions due to transmission completions followed by a back-off
period from a state $(X, Q)$ with $X_i = 1$ to $(X - e_i, Q)$ occur
at rate $p_i \mu_i$, and transitions due to transmission completions
immediately followed by another transmission (without an intermediate
back-off period) from a state $(X, Q)$ with $X_i = 1$ to
$(X, Q - e_i \indi{Q_i > 0})$ occur at rate $(1 - p_i) \mu_i$.

For any $x \in \Omega$, define
$\pi(x) = \lim_{t \to \infty} \pr{X(t) = x}$ as the stationary
probability that the activity process resides in state~$x$.
The activity process $\{X(t)\}_{t \geq 0}$ does not depend on the
process $\{Q(t)\}_{t \geq 0}$, and in fact constitutes a reversible
Markov process with a product-form stationary distribution~\cite{BKMS87}
\[
\pi(x) = Z^{-1} \prod_{i=1}^N \s_i^{x_i}, \quad x \in \Omega,
\]
where $\s_i = \nu_i / (p_i \mu_i)$ and $Z$ is the normalization constant
defined as
\[
Z=\sum_{x \in \Omega} \prod_{i=1}^N \s_i^{x_i}.
\]

In contrast, the queue-length process $\{Q(t)\}_{t \geq 0}$ \textit{does}
strongly depend on the activity process $\{X(t)\}_{t \geq 0}$,
and is considerably harder to analyze.
Since the evolution of $\{Q(t)\}_{t \geq 0}$ is modulated by that of $\{X(t)\}_{t \geq 0}$, the former process can be viewed as a queueing
network in a random environment, and in principle the queue-length process
at each individual node could be considered as a quasi-birth-and-death
process, and analyzed using matrix-analytic techniques.
However, this would only yield numerical solutions and not provide
explicit insight in how the queue-length process and delay performance
are affected by the temporal starvation issues.

\begin{remark}
We have implicitly assumed that a node may become active even when its buffer is empty, and then transmits dummy packets. It would be interesting to extend the analysis to the case where nodes with empty buffers refrain from transmission activity, but challenging since the behavior of the activity process then does depend on the queue-length process.
\end{remark}

\subsection{Symmetry assumptions}
\label{sec:sym}

To obtain more transparent results, we henceforth focus on a symmetric
scenario with homogeneous arrival, transmission and activation rates,
as well as equal back-off probabilities at all nodes:
\[
\lambda_i \equiv \lambda, \quad \mu_i \equiv \mu, \quad \nu_i \equiv \nu, \quad p_i \equiv p, \quad i = 1, \dots, N.
\]
The stationary distribution of the activity process then simplifies to
\eqn{
\label{eq:pih}
\pi(x) = Z^{-1} \s^{\sum_{i=1}^{N} x_i}, \quad x \in \Omega,
}
where $\s = \nu / (p \mu)$ and the normalization constant may be written as
$Z = \sum_{x \in \Omega} \s^{\sum_{i=1}^{N} x_i}$.
As~\eqref{eq:pih} shows, when $\s > 1$, the stationary probability of
an activity state $x \in \Omega$ increases with its cardinality in an exponential fashion.

As prototypical examples of a symmetric topology, we will focus on toric
grids, i.e.~with a periodic (wrap-around) boundary to preserve symmetry.
More precisely, we consider as conflict graph $G=(V,E)$ the even discrete torus of size $\gs$. The vertex set is $V=\{1,\dots,\gs\}^2$, hence $N=|V|=\gs^2$, and two vertices in $V$ are adjacent if they differ by $1 \pmod \gs$ in only one coordinate. A vertex is called \textit{even} (\textit{odd}) if the sum of its two coordinates is even (odd). The vertex set $V$ is thus partitioned into two classes: the set $\ev$ of even nodes and the set $\ov$ of odd nodes. Clearly $|\ev|=\gs^2/2=|\ov|$ and these are the two maximum independent sets of the graph $G$.
In the case of the toric grid, \eqref{eq:pih} shows that, as $\s$ grows large, the probability mass will concentrate on the maximum independent sets $\ev$ and $\ov$, to which we will also refer as \textit{dominant activity states}.

For the toric grid the conflict graph $G$ is a bipartite graph, and in the special case $\gs=2$, it becomes even a \textit{complete} bipartite graph with components $\ev$ and $\ov$. This conflict graph has additional features which simplify its analysis~\cite{ZBvL12}, and in the remainder of the paper we will therefore assume that $\gs\geq 4$.

By virtue of the symmetry, all the nodes will be active the same
fraction of time~$\theta$, which will approach 1/2 as $\s$ grows large,
since the probability mass will then concentrate on the two dominant
activity states $\ev$ and $\ov$.
However, the transitions between the two dominant activity states
will occur at an increasingly slow rate, resembling meta-stability
phenomena in statistical physics~\cite{MNOS04,OV05}.
As a result, individual nodes will experience long sequences of
transmissions in rapid succession, during which their buffers drain,
interspersed with extended periods of starvation, during which large numbers
of packets accumulate.
Our objective in this paper is to examine the effect
of the temporal starvation on queue-lengths and delays.
As it turns out, the analysis will revolve around the
\textit{transition times} $\tyx$ and $\txy$ between the two dominant
activity states defined as
\[
\tyx:=\inf \{ t >0 : X(t)=\ov ~|~ X(0)=\ev\} \quad \text{ and } \quad \txy:=\inf \{ t >0 : X(t)=\ev ~|~ X(0)=\ov\}.
\]

Note that these two random variables only depend on the activity process $\{X(t)\}_{t \geq 0}$, and are independent of the queue-length process $\{Q(t)\}_{t \geq 0}$.

\subsection{Stability conditions}
\label{sub:stabcond}
In view of the above observations it is natural to define the
normalized load as $\rho = \lambda / 2 \mu$ and to assume $\rho < 1$,
since otherwise all the queues will be unstable.
The latter condition is not only necessary for stability, but in fact
also sufficient for the queue-length processes at each individual node
to be positive recurrent for large enough values of~$\s$, as stated in
the next proposition.

\begin{proposition}
\label{prop:posrec}
Assume that $\rho < 1$.
Then the queue-length process $\{Q_i(t)\}_{t \geq 0}$ at each individual
node is positive recurrent for sufficiently large values of~$\s$.
\end{proposition}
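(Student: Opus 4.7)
The plan is to exploit the fact that, although $\{Q_i(t)\}_{t \geq 0}$ is not itself a Markov chain, the joint process $\{(X(t),Q_i(t))\}_{t \geq 0}$ is a continuous-time Markov chain on $\Omega \times \N$ (since the activity process is autonomous and unaffected by the queues), and to establish its positive recurrence by a Foster-Lyapunov argument with a Lyapunov function tailored to smooth out the short-term fluctuations of the modulating activity process.

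As a first step I would analyse the per-node activity fraction $\theta := \pi(\{x \in \Omega : x_i = 1\})$. By the translation symmetry of the toric grid, $\theta$ is the same at every node, so $N\theta = \sum_{x\in\Omega}\pi(x)\,|x|$. The product-form distribution \eqref{eq:pih} assigns exponentially more weight to states of larger cardinality as $\s$ grows, and concentrates as $\s\to\infty$ on the two maximum independent sets $\ev$ and $\ov$, each of cardinality $N/2$; hence $\theta \to 1/2$. Since $\rho<1$ expresses that $\lambda$ lies strictly below the asymptotic per-node throughput $\mu/2$, there exists a threshold $\s_0$ such that $\lambda < \mu\theta$ for every $\s\geq \s_0$.

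Next I would invoke Foster's criterion with the Lyapunov function $V(x,q) := q + g(x)$, where $g:\Omega\to\mathbb{R}$ is the bounded solution of the Poisson equation
\[
\sum_{x'\in\Omega} r(x,x')\bigl(g(x')-g(x)\bigr) \;=\; d(x) - \mu\theta,
\]
with $r(x,x')$ the transition rates of $\{X(t)\}$ and $d(x)$ the total rate at which $Q_i$ can decrease when the activity state is $x$ (namely $\nu$ if $x_i=0$ and all neighbours of $i$ are inactive, $(1-p)\mu$ if $x_i=1$, and $0$ otherwise). The Fredholm alternative guarantees the existence of such a bounded $g$ on the finite irreducible state space $\Omega$, the solvability condition $\sum_x \pi(x)\bigl(d(x)-\mu\theta\bigr)=0$ following from a balance argument on the marginal $X_i$. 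A direct computation of the drift of $V$ then gives, for every $(x,q)$ with $q \geq 1$,
\[
\lambda - d(x) + \sum_{x'\in\Omega} r(x,x')\bigl(g(x')-g(x)\bigr) \;=\; \lambda - \mu\theta \;<\; 0,
\]
so $V$ has drift bounded above by a strictly negative constant outside the finite set $\Omega \times \{0\}$, and Foster's criterion yields positive recurrence of the joint chain and hence of its marginal $\{Q_i(t)\}_{t \geq 0}$.

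The main subtlety is conceptual rather than computational: $\{Q_i(t)\}_{t \geq 0}$ is not Markov in isolation, so both positive recurrence and the drift computation must be formulated at the level of the joint process, and the instantaneous drift $\lambda - d(x)$ of $Q_i$ is not uniformly negative in $x$. The role of the Poisson equation is precisely to absorb this $x$-dependence into a bounded correction $g(x)$ that compensates for the state-dependent service rate, reducing the analysis to the standard Foster form with uniformly negative drift outside a finite set.
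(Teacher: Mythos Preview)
Your proof is correct and takes a genuinely different route from the paper's. The paper argues via the regenerative cycle structure induced by successive visits to the dominant states $\ev$ and $\ov$: it embeds the queue at the starts of cycles, bounds the embedded chain by a Lindley recursion $\bar Q_{n+1}=(\bar Q_n - S_n + A_n)^+$, and checks that the mean increment $\E[A-S]$ is negative when $\rho<1$, using $\E\tyx=\E\txy$. Your argument instead works directly on the joint CTMC $\{(X(t),Q_i(t))\}$ with the Lyapunov function $V(x,q)=q+g(x)$, where $g$ solves the Poisson equation for $d(x)-\mu\theta$; the key identity $\sum_x \pi(x)\,d(x)=\mu\theta$ follows (as you note) from the rate balance $\nu\,\pi(\text{$i$ unblocked})=p\mu\,\theta$, and the resulting drift $\lambda-\mu\theta$ is uniformly negative on $\Omega\times\{q\geq 1\}$ once $\sigma$ is large enough that $\theta(\sigma)>\lambda/\mu$. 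Your approach is more self-contained and yields the sharper statement that positive recurrence holds for every $\sigma$ with $\mu\theta(\sigma)>\lambda$, without invoking the dominant-state cycle structure at all; the paper's argument, by contrast, dovetails with the renewal framework it uses later for the delay lower bound in Theorem~\ref{thm2}. Both are standard techniques for Markov-modulated queues and either is acceptable here.
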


The above proposition may be intuitively explained as follows.
As mentioned earlier, the fraction of time~$\theta$ that each
individual node is active will approach 1/2 arbitrarily close for
sufficiently large values of~$\s$.
This suggests that $\lambda < \mu / 2$, i.e.~$\rho < 1$,
is sufficient for stability for large enough values of~$\s$.

\begin{proof}
We say that the activity process resides in an even period if the last dominant state it visited was $\ev$ and likewise in an odd period if the last dominant state it visited was $\ov$. We refer to a period consisting of an even and subsequent odd period as a cycle. Define $\{\hat{Q}_n\}_{n \in \N}$ to be the queue-length process of an odd node embedded at the epochs when even periods start (or equivalently when cycles start). Thanks to the strong Markov property, this latter process is an irreducible time-homogeneous Markov chain on $\N$. Denote by $S_n$ the number of packets served during the $n$-th odd period and by $A_n$ the total number of packets which arrived during the $n$-th cycle. By construction these two random variables are independent of each other and independent from those of different cycles. We claim that the following inequality holds:
\[
\hat{Q}_{n+1} \leq (\hat{Q}_n-S_n )^+ +A_n.
\]
Indeed we underestimate the total number of packets served by allowing transmission only during the odd period (instead of continuously) and only the awaiting packets which arrived in the previous cycles.

Consider now another fictitious Markov chain $\{\bar{Q}_n\}_{n \in \N}$ defined by the initial condition $\bar Q(0) = \hat Q(0)$ and by the classical Lindley recursion $\bar Q_{n+1}=(\bar Q_n-S_n+A_n)^+$. Clearly $\hat Q_n \leq \bar Q_n+A_n$ for every $n \in \N$.
Let $Y$ be a random variable distributed as $A-S$. It is well known that $0$ is a positive recurrent state for $\{\bar Q_n\}_{n \in \N}$ if $\E Y < 0$. This is indeed the case, since
$\E Y= \lambda \E \tyx  + (\lambda-\mu) \E \txy  = (2 \lambda - \mu) \E \tyx = \mu (\rho-1) <0.$
Therefore also for the chain $\{\hat Q_n\}_{n\in\N}$ the state $0$ is positive recurrent and, since the cycle time has finite expectation (the Markov process $\{X(t)\}_{t\geq 0}$ is irreducible on a finite state space, hence positive recurrent), it follows that the original continuous-time process $\{Q_i(t)\}_{t\geq 0}$ is positive recurrent as well.
\end{proof}

The next proposition provides a lower bound for the value of~$\s$
that is required for stability in terms of the normalized load~$\rho$.

\begin{proposition}
\label{prop:stable}
In order for all queues to be stable, it is required that
\[\s > \frac{\rho}{2 (1-\rho)}.\]
\end{proposition}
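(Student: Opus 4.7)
The plan is to combine a necessary stability condition for the queue at a single node with a local upper bound, derived from the product-form invariant law \eqref{eq:pih}, on the steady-state activity fraction $\theta := \pi(X_i = 1)$ (which is the same for every $i$ by symmetry).

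First, since the activity process evolves autonomously of the queue process, the long-run rate at which node $i$ is capable of serving packets (real or dummy) equals $\theta \mu$ by ergodicity of $\{X_i(t)\}_{t \geq 0}$. Arrivals come in at rate $\lambda$ regardless, so a sample-path comparison of the cumulative arrival process and the cumulative potential-service process shows that $Q_i(t)/t \to \lambda - \theta \mu$ almost surely whenever $\lambda \geq \theta \mu$, ruling out positive recurrence. Hence positive recurrence of $\{Q_i(t)\}_{t \geq 0}$ forces $\theta > \lambda/\mu = \rho/2$.

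Second, I would bound $\theta$ from above by a one-edge computation in the invariant measure. Pick any neighbor $j$ of $i$; the configuration $X_i = X_j = 1$ is forbidden, so $\pi(X_i=1, X_j=0) + \pi(X_i=0, X_j=1) + \pi(X_i=X_j=0) = 1$, and by symmetry the first two probabilities each equal $\theta$, giving $\pi(X_i = X_j = 0) = 1 - 2\theta$. Conditioning on the restriction $z$ of the configuration to $V \setminus \{i, j\}$, the product form gives
\[
\pi(X_i = 1, X_j = 0) = \frac{\sigma}{Z} \sum_{z \in \Omega_{ij}} \sigma^{|z|} \indi{A_i(z)}, \qquad \pi(X_i = X_j = 0) = \frac{1}{Z} \sum_{z \in \Omega_{ij}} \sigma^{|z|},
\]
where $\Omega_{ij}$ is the set of independent sets of the subgraph induced on $V \setminus \{i, j\}$ and $A_i(z)$ is the event that every neighbor of $i$ other than $j$ is inactive in $z$. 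Dropping the indicator $\indi{A_i(z)} \leq 1$ gives $\theta \leq \sigma (1 - 2\theta)$, hence
\[
\theta \leq \frac{\sigma}{1 + 2\sigma}.
\]

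Chaining the two bounds yields $\rho/2 < \theta \leq \sigma/(1 + 2\sigma)$, which rearranges directly to the claimed $\sigma > \rho/(2(1-\rho))$. The only delicate point is the first step: although the intuition that ``arrivals at rate $\lambda$ cannot be absorbed by a server that is on only a fraction $\theta$ of the time'' is obvious, making it rigorous requires the autonomy and ergodicity of $\{X_i(t)\}$ plus a sample-path drift argument; the second step, by contrast, is essentially mechanical and uses only the explicit form of $\pi$.
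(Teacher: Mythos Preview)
Your argument is correct and follows essentially the same two-step route as the paper: the necessary condition $\theta>\rho/2$ for stability, combined with the one-edge bound $\theta\le\sigma\,\pi(X_i=X_j=0)=\sigma(1-2\theta)$. The only cosmetic difference is that the paper invokes the rate-balance identity $\theta=\sigma\,\pi(\text{node $i$ unblocked})$ and bounds $\pi(\text{unblocked})\le\pi(X_i=X_j=0)$, whereas you re-derive the same inequality directly from the product form; the paper also substitutes $\theta>\rho/2$ into the bound to write $\pi(\text{unblocked})<1-\rho$ before combining, while you solve $\theta\le\sigma/(1+2\sigma)$ first, but the resulting inequality on $\sigma$ is identical.
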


\begin{proof}
In order to be stable, each individual node should be active at least
a fraction $\lambda/\mu=\rho/2$ of the time, and thus $\theta > \rho/2$.
The fraction of time that a node is active may be expressed as
$\theta = \sigma \pr{\text{node is unblocked}}$.
Now consider an arbitrary node and any of its neighbors.
Then the fraction of time that the former node is unblocked is bounded
from above by the fraction of time that both nodes are inactive, so that
$\pr{\text{node is unblocked}} \leq 1 - \rho$.
Combining the above three inequalities and some rearranging yields
the result.
\end{proof}

\section{Main results}
\label{overview}

In Subsection~\ref{sub1} we present the two main theorems, which lead
to bounds for the long-term average delay in random-access grid networks. 
The first result follows from a third theorem on the so-called
communication height, presented in Subsection~\ref{sub2}, along with
a sketch of the proof.
In Subsection~\ref{sub3} we show how the result for the communication
height can be leveraged to characterize the mixing time of the
activity process, and we also discuss connections with other results
in the areas of statistical physics and theoretical computer science. 

\subsection{Transition times and delays}
\label{sub1}

As mentioned earlier, the bounds we derive revolve around three simple
observations:
(i) {high load} requires {high activity factors} for stability;
(ii) {high activity factors} cause {long mixing times}, in particular
{\em slow transitions between dominant activity states};
(iii) {slow transitions between dominant states} imply {long starvation
periods}, and hence {large queue-lengths and delays}.

The next two theorems formalize the last two observations.
We henceforth assume $\mu$ to be fixed, and when we write $\s \to \infty$ we allow for either $\ninf$, $p \downarrow 0$, or both. For compactness we only attach $\s$ in brackets to various quantities to reflect the dependence on both $\nu$ and $p$ in limit statements for $\s \to \infty$. Define $\alpha := \liminfnu \frac{\log p}{\log p - \log \nu} \geq 0$.

\begin{theorem}[Transition time between dominant states]\label{thm1}
\[
\liminfnu \frac{\log \E \tyx(\s) }{\log \s}\geq \gs + \alpha \geq L.
\]
\end{theorem}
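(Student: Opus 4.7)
The plan is to reduce Theorem~\ref{thm1} to the communication-height estimate that is to be established later in the paper (the result announced in the next subsection and proved in Section~\ref{sec:ch}), via the classical bottleneck lower bound on mean hitting times for reversible Markov chains. As a first step I would uniformize the continuous-time activity process $\{X(t)\}_{t \geq 0}$ at any rate exceeding the maximal total outgoing rate, obtaining a reversible discrete-time chain on $\Omega$ with the same product-form stationary distribution $\pi(x) = Z^{-1}\s^{|x|}$; the continuous- and discrete-time mean hitting times from $\ev$ to $\ov$ differ only by a factor of the uniformization rate.

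The crucial input is then the bottleneck inequality
\[
\E_{\ev}[\tyx] \;\geq\; \frac{\pi(A)}{\sum_{x \in A,\, y \notin A} \pi(x)\, q(x,y)},
\]
valid for every $A$ with $\ev \in A$ and $\ov \notin A$, where $q$ denotes the continuous-time transition rates. I would take $A$ to be the connected component of $\ev$ inside $\{x \in \Omega : |x| > |\ev| - \Gamma\}$, with $\Gamma$ the cardinality ``depth'' of the communication height between $\ev$ and $\ov$ supplied by the forthcoming theorem. By construction every exit from $A$ is necessarily a deactivation (rate $p\mu$) from a state of cardinality $|\ev| - \Gamma + 1$, whose $\pi$-mass is $\s^{-(\Gamma-1)}\pi(\ev)$. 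Since the boundary has at most polynomially many edges in $L$, and $\s\, p\mu = \nu$, this yields a lower bound of the form $\E_{\ev}[\tyx] \geq C(L)\,\s^{\Gamma}/\nu$. Taking logarithms, dividing by $\log \s$, and using the identity $\limsupnu(\log \nu)/\log \s = 1 - \alpha$ (an immediate consequence of $\log \s = \log \nu - \log p - \log \mu$ and the definition of $\alpha$) gives
\[
\liminfnu \frac{\log \E \tyx(\s)}{\log \s} \;\geq\; \Gamma - 1 + \alpha,
\]
and Theorem~\ref{thm1} reduces to the communication-height estimate $\Gamma \geq L + 1$.

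The main obstacle is precisely this combinatorial/isoperimetric input: showing that any sequence of single-site (de)activations joining $\ev$ and $\ov$ in the state space of independent sets of the $L \times L$ toric grid must visit a configuration of cardinality at most $|\ev| - L - 1$. This is the only place where the planar grid topology enters essentially; it will require identifying a critical ``contour'' of length at least $L$ that has to be swept in any such path and accounting for how many simultaneous deactivations such a contour forces. Once this estimate is in hand, the reversibility argument sketched above yields Theorem~\ref{thm1} essentially mechanically.
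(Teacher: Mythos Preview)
Your approach is sound and gives a more self-contained route than the paper's. Both arguments reduce Theorem~\ref{thm1} to the communication-height result $\Gamma=L+1$ (Theorem~\ref{thm3}); they differ only in how $\Gamma$ is converted into a hitting-time lower bound. The paper uniformizes, places the discrete-time chain in the Freidlin--Wentzell framework of~\cite{OV05}, and invokes the general cycle-exit estimate there (Theorem~6.23) to obtain $\pr{\t^{\ev}_{\pap\SS}<e^{\b(\Gamma-\e)}}\to 0$, from which the mean bound follows after undoing the uniformization. Your capacity argument bypasses this external machinery: the Dirichlet principle with test function $\mathbf{1}_A$ gives $\mathrm{cap}(\ev,\ov)\le Q(A,A^c)$, and the potential-theoretic identity $\E_{\ev}[\tyx]=\mathrm{cap}(\ev,\ov)^{-1}\sum_y\pi(y)\,h_{\ev,\ov}(y)$ then delivers the bound directly. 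In effect you are repurposing the estimate behind Lemma~\ref{lem:cond} (the conductance of~$\SS$) for hitting times rather than mixing times.

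Two points to correct. First, the bottleneck inequality as you wrote it, with $\pi(A)$ in the numerator, is not valid for an arbitrary starting point in $A$; one can build small reversible chains where it fails when the start sits next to a fast exit edge. What the capacity representation yields rigorously is $\E_{\ev}[\tyx]\ge \pi(\ev)/Q(A,A^c)$, using only $h_{\ev,\ov}(\ev)=1$, and that is exactly what your computation needs, since $\pi(\ev)=Z^{-1}\s^{L^2/2}$ is the term you actually plug in. Second, the claim that the boundary has ``polynomially many edges in $L$'' is neither obvious nor needed: Theorem~\ref{thm1} is an asymptotic in $\s$ with $L$ fixed, so any $L$-dependent prefactor $C(L)$ vanishes after dividing by $\log\s$. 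With these adjustments your argument goes through and yields $\liminfnu \log\E\tyx(\s)/\log\s \ge \Gamma-1+\alpha$, reducing the theorem to $\Gamma\ge L+1$ exactly as you say.
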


With minor abuse of notation, let $\E W(\s)$ be the long-term average delay (waiting time plus service time) of an arbitrary packet, where again the attached $\s$ in brackets describes the dependence on both $\nu$ and $p$.

\begin{theorem}[Long-term average delay]\label{thm2}
\[
\liminfnu \frac{\E W(\s)}{\E \tyx(\s)} \geq \frac{1}{4} \frac{\mu}{\mu - \lambda} = \frac{1}{4 - 2 \rho}.
\]
\end{theorem}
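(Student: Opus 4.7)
The plan is to apply Little's law to the queue at an arbitrary odd node $i$ and to lower-bound the stationary mean queue length $\E Q_i$ via a renewal--reward argument on cycles defined by successive returns of $\{X(t)\}_{t\geq 0}$ to the dominant state $\ev$. Each cycle decomposes into an \emph{even period} of length $T_e\ed\tyx$ followed by an \emph{odd period} of length $T_o\ed\txy$; by the symmetry of the torus $\E T_e=\E T_o$, so the cycle length $C$ satisfies $\E C=2\E\tyx$.

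The proof hinges on two sample-path bounds on $Q_i(t)$ over a single cycle. During the even period node $i$ is almost permanently blocked, because all four of its neighbours are active in $\ev$ and the process remains close to $\ev$ throughout. Consequently the number of transmissions node $i$ can complete during the even period is of lower order than $T_e$ as $\s\to\infty$, so $Q_i(t)\geq N_i(t)-o(T_e)$, where $N_i(t)$ counts arrivals at $i$. Since the Poisson arrivals at $i$ are independent of $\{X(t)\}$ and hence of $T_e$, conditioning on $T_e$ and applying Jensen's inequality $\E T_e^2\geq(\E T_e)^2$ gives
\[
\E \int_0^{T_e} Q_i(t)\,dt \;\geq\; \tfrac{\lambda}{2}\,(\E T_e)^2\,(1-o(1)).
\]
During the subsequent odd period the queue drains, but its right-derivative is bounded below by $-(\mu-\lambda)$ because services occur at rate at most $\mu$ while arrivals continue at rate $\lambda$. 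Starting from $Q_i(T_e)\geq \lambda T_e\,(1-o(1))$, the queue therefore remains positive over an interval of length at least $\lambda T_e/(\mu-\lambda)$, which under the stability condition $\rho<1$ and the symmetry $\E T_o=\E T_e$ lies typically inside an odd period, contributing a triangular area
\[
\E\int_{T_e}^{T_e+T_o} Q_i(t)\,dt \;\geq\; \frac{\lambda^2\,(\E T_e)^2}{2(\mu-\lambda)}\,(1-o(1)).
\]

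Adding the two contributions yields $\E\int_0^C Q_i(t)\,dt\geq \tfrac{\lambda\mu\,(\E T_e)^2}{2(\mu-\lambda)}\,(1-o(1))$; dividing by $\E C=2\E\tyx$ and invoking Little's law $\E W=\E Q_i/\lambda+O(1/\mu)$ gives
\[
\E W(\s) \;\geq\; \frac{\mu}{4(\mu-\lambda)}\,\E\tyx(\s)\,(1-o(1)),
\]
from which the theorem follows upon taking $\liminfnu$, since $\E\tyx(\s)\to\infty$ by Theorem~\ref{thm1}.

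The principal obstacle is making rigorous the assertion that node $i$ performs only $o(T_e)$ services during an even period in the regime $\s\to\infty$. This boils down to showing that, conditionally on being in an even period, $\{X(t)\}$ spends a vanishing fraction of time in configurations where node $i$ is unblocked; intuitively this should follow from the product-form~\eqref{eq:pih} restricted to the even basin, but quantitative control requires the same conductance/communication-height machinery that feeds into Theorem~\ref{thm1}. A secondary, milder complication is that the triangular lower bound on the odd-period integral implicitly assumes $T_o\geq \lambda T_e/(\mu-\lambda)$; the atypical complementary event is handled by a truncation argument leveraging $\E T_o=\E T_e$ and the strict inequality $(\mu-\lambda)\E T_o>\lambda\E T_e$ furnished by $\rho<1$.
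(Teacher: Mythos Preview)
Your high-level strategy --- renewal cycles between visits to $\ev$ and $\ov$, a sample-path lower bound on the queue, triangular-area computation, Jensen's inequality, and Little's law --- is precisely the route the paper takes. However, you have the difficulty of the two subsidiary steps inverted, and this matters for whether the argument actually delivers the stated constant.

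What you call the ``principal obstacle'' (that node $i$ completes only $o(T_e)$ transmissions during an even period) is in fact the easy step and requires \emph{no} conductance or communication-height input. Let $S_\ev$ and $U_\ev$ be the times during an even period that $\{X(t)\}$ spends in $\ev$ and outside $\ev$, respectively. The renewal--reward theorem applied to the indicator $\indi{X(t)=\ev}$ gives $\E S_\ev /(\E\tyx+\E\txy)=\pi(\ev)$, and the product form~\eqref{eq:pih} alone yields $\pi(\ev)\uparrow 1/2$ as $\s\to\infty$. Combined with $\E\tyx=\E\txy$ this gives $\E U_\ev/\E\tyx\to 0$. Since an odd node is blocked whenever $X(t)=\ev$, the expected number of its transmissions in an even period is at most $\mu\,\E U_\ev=o(\E\tyx)$. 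That is the whole argument; no basin analysis is needed.

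Conversely, what you call the ``secondary, milder complication'' is where the genuine gap lies. Your odd-period bound
\[
\E\int_{T_e}^{T_e+T_o} Q_i(t)\,dt \;\ge\; \frac{\lambda^2(\E T_e)^2}{2(\mu-\lambda)}\,(1-o(1))
\]
is exactly the area of the full triangle of height $\lambda T_e$ and base $\lambda T_e/(\mu-\lambda)$, but that triangle only fits inside the odd period on the event $\{T_o\ge \lambda T_e/(\mu-\lambda)\}$. On the complement the integral can be strictly smaller. Your proposed truncation, lower-bounding by zero off that event, controls a term of the form $\E\bigl[T_e^2\,\indi{T_o\ge c T_e}\bigr]$ with $c=\lambda/(\mu-\lambda)<1$; since $T_e$ and $T_o$ are i.i.d.\ (by the strong Markov property and symmetry) but have a nondegenerate distribution whose shape you do not control, this loses a constant factor and does not recover the sharp $\tfrac{1}{4}\mu/(\mu-\lambda)$. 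The paper sidesteps this entirely by \emph{not} regenerating after a single cycle: it introduces a deterministic minorant process $Z(t)$ (rate $+\lambda$ in state $\ev$, rate $-(\mu-\lambda)$ otherwise, reflected at $0$) and regenerates only at the first cycle boundary where $Z$ returns to $0$, after a random number $M$ of cycles. The area over these $M$ cycles decomposes into triangles \emph{plus parallelograms} carrying the leftover height $Z_k$ from cycle to cycle; Wald's equation then eliminates $M$ and no mass is discarded. If you want the stated constant, you will need this multi-cycle regeneration (or an equivalent device) rather than a single-cycle truncation.
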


In view of Proposition~\ref{prop:stable} and Theorem~\ref{thm2}, an immediate corollary of the above theorem is that the long-term average delay must scale as $(\frac{1}{1 - \rho})^{\gs}$ as $\rho \uparrow 1$. This is to be contrasted with the usual linear scaling in $\frac{1}{1 - \rho}$ in conventional queueing networks, and further reveals that the exponential rate of growth increases with the size of the grid.

Theorem~\ref{thm2} may be understood as follows. First, by virtue of the symmetry, we may as well consider the average delay of an arbitrary packet at a particular node, say an odd one. In view of the nature of the transitions between the dominant states~$\ev$ and~$\ov$, this node will hardly be active during the
transition time $\tyx(\s)$ for large values of~$\sigma$.
As a result, the queue at this node will roughly grow at rate~$\lambda$, reaching an expected length of $\lambda \E \tyx(\s)$ at the end of the transition period, and thus having an average size of approximately $\frac{1}{2} \lambda \E \tyx(\s)$ over this period.
During the subsequent transition time $\txy(\s)$, the queue will decrease at a rate of at most $\lambda - \mu$, and thus requires a time period of at least $\frac{\lambda}{\mu - \lambda} \E \tyx(\s)$ to empty, so that the average size over that period is at least 
\[
\frac{1}{2} \frac{\lambda}{\mu - \lambda} \frac{\E ((\tyx(\s))^2)}{\E \tyx(\s)}.
\]
Noting that $\E ((\txy(\s))^2) \geq (\E \txy(\s))^2$ and $\E \txy(\s) = \E \tyx(\s)$, we conclude that the average queue-length over the course of the cycle consisting of the transition periods $\tyx(\s)$ and $\txy(\s)$ is at least
\[
\frac{1}{2} \left ( \frac{1}{2} \lambda +
\frac{1}{2} \frac{\lambda}{\mu - \lambda}\right) \E \tyx(\s) =
\frac{\lambda}{4} \frac{\mu}{\mu - \lambda} \E \tyx(\s) =
\frac{\lambda}{4 - 2 \rho} \E \tyx(\s)
\]
for sufficiently large values of~$\sigma$. Invoking Little's law, we then obtain the lower bound as stated in the theorem for the long-term average delay of an arbitrary packet.

Of course, the above arguments are heuristic in nature. We will provide a rigorous proof of Theorem~\ref{thm2} in Section~\ref{sec:proofrenewal}. Specifically, we will establish that an odd node receives hardly any service during a transition time $\tyx(\s)$ for large values of~$\s$, and justify the informal calculations based on drifts and expected values.

\subsection{Communication height}
\label{sub2}

The high-level idea of the proof of Theorem~\ref{thm1} is that, in order for a transition from the even dominant state $\ev$ to the odd dominant state $\ov$ to occur, the activity process $X(t)$ must follow a transition path through some highly unlikely configurations consisting of mixtures of even and odd active nodes, and the time to reach such configurations is correspondingly long. Theorem~\ref{thm3} below shows a key feature of all such transition paths. 

In the rest of the paper we will interchangeably refer to a feasible configuration $x \in \Omega$ or to the corresponding independent set $I$ of the conflict graph $G$. Clearly $\sum_{i=1}^N x_i = |I|$. For any independent set $I\in \Omega$ define $\D(I)$ to be the difference between the size of a maximum independent set and that of the set $I$, i.e. $\D(I):=\gs^2/2-|I|$. The quantity $\D(I)$ describes how ``inefficient'' the configuration $I$ is with respect to the maximum independent sets $\ev$ and $\ov$, and hence how much less likely it is in view of~\eqref{eq:pih}. For this reason, we will refer to $\D(I)$ as the \textit{efficiency gap} of configuration $I$. A {\it path} is a sequence $\o=(I_1,\dots,I_n)$, $n\in\N$, where $I_k\in\Omega$ for $k=1,\dots,n$ and such that $I_k$ and $I_{k+1}$ differ by a single flip, for $k=1,\dots,n-1$. We write $\o: I \to I'$ to denote a path from $I$ to $I'$. The {\it communication height\/} between two states $I,I'\in\Omega$ is
\[
\phi(I,I') := \min_{\o : I\to I'} \max_{J \in\o} \D(J).
\]
The quantity $\phi(I,I')$ is the minimum number of nodes which must be simultaneously inactive with respect to the maximum number of active nodes to make the transition $I \to I'$ possible. The cornerstone of the proof of Theorem~\ref{thm1} is the derivation of the communication height $\G:=\phi(\ev,\ov)$ between the two dominant configurations $\ev$ and $\ov$.
\begin{theorem}[Communication height]\label{thm3}
\[
\Gamma=\gs+1.
\]
\end{theorem}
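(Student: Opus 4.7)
The identity $\Gamma = L+1$ splits into a matching upper and lower bound, which I attack by an explicit construction and by an isoperimetric argument respectively.

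\emph{Upper bound} ($\Gamma \le L+1$). I exhibit a reference path $\omega : \ev \to \ov$ in three phases. Phase~1 deactivates the $L/2$ active cells of row~$1$ and then those of row~$2$ one at a time, monotonically raising $\D$ from $0$ to $L$ and leaving an adjacent two-row vacancy. Phase~2 rotates this vacancy around the torus: for each $k = 3, 4, \ldots, L$ and each column $j$ with $k+j$ even, perform the swap pair ``delete $(k, j)$, then activate $(k-1, j)$''; the delete raises $\D$ to $L+1$ and the subsequent activation returns it to $L$. A direct check shows that all four neighbours of $(k-1, j)$ are inactive at the activation step. After mini-phase $k$ the vacancy has slid to rows~$1$ and $k$, and rows $2, \ldots, k-1$ are in $\ov$-pattern. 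Phase~3 finally activates in turn the $L/2$ $\ov$-cells of row~$L$ and then of row~$1$; each activation is conflict-free and lowers $\D$ by one, ending at $\ov$ with $\D = 0$. The peak of $\D$ along this path is $L+1$, attained only transiently during the delete step of each swap pair in Phase~2.

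\emph{Lower bound} ($\Gamma \ge L+1$). Suppose, toward a contradiction, a path $\omega = (I_0 = \ev, \ldots, I_n = \ov)$ with $\max_m \D(I_m) \le L$. For $I \in \Omega$ write $A_\ov(I) := I \cap \ov$, and for $S \subseteq \ov$ let $N(S) \subseteq \ev$ be the $\ev$-cells adjacent to some cell of $S$; set $f(S) := |N(S)| - |S|$. Since $I$ is an independent set, $N(A_\ov(I)) \subseteq \ev \setminus I$. Consider the first time $m^*$ at which $|A_\ov|$ reaches some chosen critical value $k^*$; by definition this step is an activation $I_{m^*} = I_{m^*-1} \cup \{o\}$ with $o \in \ov$. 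The four $\ev$-neighbours of $o$ must already be absent from $I_{m^*-1}$, so writing $S := A_\ov(I_{m^*})$ one finds
\[
\D(I_{m^*-1}) \;\ge\; |N(S)| - (|S|-1) \;=\; f(S) + 1 \;\ge\; \psi(k^*) + 1,
\]
where $\psi(k^*) := \min\{f(S) : S \subseteq \ov,\, |S| = k^*\}$. Combined with $\D(I_{m^*-1}) \le L$, this forces $\psi(k^*) \le L-1$. Hence it suffices to exhibit a critical $k^*$ with $\psi(k^*) \ge L$.

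The proof is thus reduced to an isoperimetric inequality on the bipartite graph between $\ev$ and $\ov$ in the $L \times L$ torus: there exists $k^* \in \{1, \ldots, L^2/2 - 1\}$ such that every $S \subseteq \ov$ of cardinality $k^*$ has $|N(S)| \ge |S| + L$. Intuitively, $k^*$ should be chosen just beyond the largest size of a ``compact cluster'' of $\ov$-cells that fits in the torus without wrapping: at that cardinality, any such $S$ must stretch across one of the non-contractible cycles of the torus, and each such stretching contributes at least $L$ to the bipartite boundary. This step is the main obstacle. For small $|S|$, diamond-type configurations of $\ov$-cells realise $f$ as low as $3$, and even at strip cardinalities $kL/2$, sub-extremal L-shape configurations can give $f \le L-1$ for $L \ge 6$, so $k^*$ must be picked carefully. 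I expect to finish this step by combining the classical vertex-isoperimetric inequality on the discrete $L \times L$ torus with a topological dichotomy that distinguishes subsets of $\ov$ admitting a contractible enclosure from those forced to wrap non-trivially around the torus.
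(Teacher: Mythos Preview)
Your upper-bound construction is correct and yields $\max_\omega \Delta = L+1$. It differs from the paper's reference path (which grows a linear cluster of odd nodes until it is one step short of a stripe, then closes the stripe and sweeps it across), but both achieve the same bound.

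The lower bound, however, has a genuine gap. You reduce it to the claim that some cardinality $k^*$ satisfies $\psi(k^*) \ge L$, i.e.\ \emph{every} $S\subseteq\ov$ with $|S|=k^*$ has $|N(S)|-|S|\ge L$, and you explicitly leave this unproved. This is not a routine step. The quantity $4f(S)$ equals the contour length of the odd region $S^+=S\cup N(S)$, and for any given $k$ one can try to realise $S^+$ either as a contractible cluster (when $k$ is not too large) or as the complement of a contractible even cluster (when $k$ is not too small). Both options produce $f(S)$ of order $\sqrt{k}$ or $\sqrt{L^2/2-k}$ respectively, and at the crossover $k\approx L^2/4$ both give $f(S)$ of order $L/\sqrt 2$ to $L$ depending on the shape. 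Whether the minimum over all $S$ of a fixed size ever reaches $L$ is exactly the delicate point, and the standard edge-isoperimetric inequality on the torus only gives $f(S)\ge L/2$ at the midpoint, which is not enough. Your own remark that ``sub-extremal L-shape configurations can give $f\le L-1$'' already signals that a purely cardinality-based argument is fragile here.

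The paper sidesteps this entirely: instead of tracking $|A_\ov|$, it classifies configurations topologically into clusters, stripes and crosses, and shows that any path from $\ev$ (cluster-only) to $\ov$ (a cross) must pass through either a stripe or a \emph{critical} cross; Lemma~5.1 then gives contour length $\ge 4L$ for stripes and $\ge 8L-12$ for critical crosses, forcing $\Delta\ge L+1$ at that step. This is a path-based topological obstruction, not a statement about all sets of a fixed cardinality, and that distinction is precisely what makes the argument go through. Your closing sentence (``topological dichotomy that distinguishes subsets admitting a contractible enclosure from those forced to wrap'') is heading toward the same idea, but as written the proof is incomplete: you would need to prove something equivalent to the paper's stripe/cross contour bounds, at which point the first-hitting-time reduction via $k^*$ becomes superfluous.
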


The proof of Theorem~\ref{thm3}, given in Section~\ref{sec:ch}, is technically challenging, since all possible paths between the dominant states need to be taken into consideration. In order to do so, we use a similar approach as in~\cite{B99, G08, R06} which associates with each independent set $I \in \Omega$ a collection of cutsets and contours, that effectively translates the problem into geometric properties of these contours.

Theorem~\ref{thm3} is a powerful result. In Section~\ref{sec:thm31} we show
that together with a general result for exit times for interacting particle systems
in statistical physics~\cite{OV05} it readily leads to Theorem~\ref{thm1}.
Theorem~\ref{thm3} also gives a way to derive results for mixing times. Since the majority of related literature has been dealing with mixing times, instead of transition times, we present in Subsection~\ref{sub3} the mixing time result that can be obtained using Theorem~\ref{thm3}.

\subsection{Mixing times}
\label{sub3}

Let $d(t)$ be the maximal distance over $x\in\Omega$, measured in terms of total variation, between the distribution at time~$t$ and the stationary distribution, i.e. 
\[
d(t):=\max_{x\in\Omega}\|\pr{X(t) \in \cdot ~|~ X(0)=x}-\pi\|_{\rm TV},
\]
where $\| p-q \|_{\rm TV}=\frac{1}{2} \sum_{x \in \Omega} |p(x)-q(x)|$ is the total variation distance between two probability measures $p,q$ on the finite state space $\Omega$.
The mixing time of the process $\{X(t)\}_{t\geq 0}$ is defined as
\[
t_{{\rm mix}}(\e,\s)=\inf\{t \geq 0 : d(t)\leq \e \}
\]
and expresses the time needed for a process to converge to stationarity. 

An important observation is that when $\sigma$ is large, the
distribution in~\eqref{eq:pih} favors dense configurations with
$\D(I)$ small and with extremes $\ev$ and $\ov$, while configurations
that are roughly half odd and half even are highly unlikely.
This `bimodality' causes slow convergence to equilibrium,
because the Markov process is either in the even or odd sublattice,
while it can hardly pass through the less likely set in the middle
(the bottleneck) to go from the even to the odd sublattice or vice versa.
Hence, a classical strategy for proving slow mixing is to separate the
state space into three sets: Two of which are exponentially larger
(in terms of probability) than the third, such that moving from one
large set to the other through local steps of the Markov process
requires passing through the small bottleneck set in the middle.  
This strategy has been applied in \cite{B99,G08,R06}
to prove slow mixing of local or single-site update Markov processes
(like ours) on the grid.
Traditionally, the two dominant sets are those configurations that lie predominantly on the even or odd sublattice, and the bottleneck set consists of those configurations that have a relatively balanced number of even and odd vertices. The challenge is then to show that this bottleneck or balanced set has exponentially small probability, in which case the conductance of the Markov process would be exponentially small, leading via a standard argument to exponentially large mixing times. This challenge essentially boils down to counting the number of balanced configurations, which is solved in \cite{B99,G08} by powerful combinatorial enumeration methods. In \cite{R06} a different way of partitioning was introduced, that divides the state space according to a certain topological obstruction, rather than according to the relative number of even and odd vertices.
In that case, again  a counting problem needs to be solved, in order to establish that the set of configurations having this topological obstruction is exponentially small, but the counting problem is much simpler. Recently, the same approach was followed in \cite{BGTR12}, but with sharper estimates for the counting problem.

What is new about our approach is that we partition the state space in a fundamentally different way, such that there is no longer the need to solve a counting problem. The approach is best explained in terms of the set
\eqn{\label{eq:defS}
\SS:=\{ I \in \Omega ~|~ \phi(\ev,I) \leq \gs \}.
}
From the perspective of the maximum configuration $\ev$, the set $\SS$ contains all configurations that can be reached by the process by having at most $\gs$ nodes simultaneously inactive with respect to the maximum number of active nodes. Moreover, Theorem~\ref{thm3} tells us that the Markov process starting in $\ev$ cannot reach the configuration $\ov$ without leaving the set $\SS$. Hence, the two dominant sets are $\SS=\{ I \in \Omega ~|~ \phi(\ev,I) \leq \gs \}$, and by symmetry $\SS':=\{ I \in \Omega ~|~ \phi(\ov,I) \leq \gs \}$, while the remaining configurations together form the bottleneck. Now, since the distribution~\eqref{eq:pih} readily provides an upper bound on the probability of this bottleneck set, there is no counting problem involved in proving exponentially small conductance and hence slow mixing; see Theorem~\ref{thm:mix} below.

Let us now demonstrate the above reasoning to get an upper bound on the conductance of the activity process and thus a lower bound for the mixing time. These two results are not necessary for deriving the delay bounds, but are of independent interest in view of references~\cite{B99,G08,R06}. Therefore, for simplicity, we assume $p=1$ and $\mu=1$ in the rest of this subsection.

For $S \subseteq \Omega$, let $\pi(S):=\sum_{x \in S } \pi(x)$ be its stationary probability and define the \textit{conductance} of $S \subseteq \Omega$ as
\[\Phi(S):=\frac{\sum_{x \in \SS} \sum_{y \in \SS^c} \pi(x) q(x,y)}{\pi(S)},\]
where $q(x,y)$ is the transition rate from state $x$ to state $y$. The conductance of a Markov process, also known in literature as \textit{bottleneck ratio} or \textit{Cheeger constant}, is effectively a measure of how well connected the state space is, taking into account possible geometric features usually referred to as bottlenecks, that strongly influence the mixing time.

Let $\pam \SS$ denote the inner boundary of the set $\SS$, i.e. the states in $\SS$ that communicate with $\SS^c$.
\begin{lemma}[Upper bound on conductance of $\SS$]
\label{lem:cond}
\[
\Phi(\SS) \leq \frac{|\partial^- \SS| (\gs^2/2-\gs)}{\s^{\gs} }.
\]
\end{lemma}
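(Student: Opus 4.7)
The plan is to bound numerator and denominator of the conductance $\Phi(\SS)$ directly. Since $q(x,y) > 0$ under single-site update dynamics requires $x$ and $y$ to differ by exactly one flip, the numerator reduces to
\[
\sum_{x \in \pam \SS} \sum_{\substack{y \sim x \\ y \in \SS^c}} \pi(x) q(x,y),
\]
where $y \sim x$ means that $y$ is a single-flip neighbor of $x$. The first step is to use Theorem~\ref{thm3} to pin down the active-set cardinality of any $x \in \pam \SS$ and to identify which type of move can actually cross the boundary of $\SS$.

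Given $x \in \SS$ with a single-flip neighbor $y \in \SS^c$, I would take any path $\o : \ev \to x$ realizing $\phi(\ev, x) \leq \gs$ and append $y$ to obtain a path from $\ev$ to $y$ whose maximum efficiency gap is $\max(\gs, \D(y))$. Since $y \notin \SS$ forces $\phi(\ev, y) \geq \gs + 1$, this requires $\D(y) \geq \gs + 1$. An activation move would give $\D(y) = \D(x) - 1 \leq \gs - 1$ (using $\D(x) \leq \phi(\ev, x) \leq \gs$, as any path from $\ev$ to $x$ contains $x$ itself), which is impossible; so $y$ is obtained from $x$ by deactivation. Then $\D(y) = \D(x) + 1 \geq \gs + 1$ combined with $\D(x) \leq \gs$ pins down $\D(x) = \gs$, equivalently $|x| = \gs^2/2 - \gs$.

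The rest is a direct computation. For each $x \in \pam \SS$ we have $\pi(x) = Z^{-1} \s^{\gs^2/2 - \gs}$; each deactivation rate is $q(x,y) = p \mu = 1$; and the number of deactivation neighbors of $x$ lying in $\SS^c$ is bounded by the number of active nodes of $x$, namely $|x| = \gs^2/2 - \gs$. Hence
\[
\sum_{x \in \SS} \sum_{y \in \SS^c} \pi(x) q(x,y) \leq |\pam \SS| \cdot (\gs^2/2 - \gs) \cdot Z^{-1} \s^{\gs^2/2 - \gs}.
\]
For the denominator, $\pi(\SS) \geq \pi(\ev) = Z^{-1} \s^{\gs^2/2}$. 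Taking the ratio, the factor $Z^{-1}$ cancels and we obtain the stated bound $|\pam \SS|(\gs^2/2 - \gs)/\s^{\gs}$.

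The main obstacle is conceptual rather than computational: it is the observation that Theorem~\ref{thm3} forces every boundary edge out of $\SS$ to correspond to a deactivation from a state whose active set has \emph{exactly} $\gs^2/2 - \gs$ elements. This sharp cardinality control, combined with the product form $\pi(x) = Z^{-1}\s^{|x|}$, is precisely what allows one to bypass the combinatorial enumeration of the bottleneck configurations used in \cite{B99,G08,R06}.
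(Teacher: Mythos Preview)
Your proof is correct and follows essentially the same approach as the paper: show that every $x \in \pam \SS$ has exactly $\gs^2/2 - \gs$ active nodes and that every boundary crossing is a deactivation, then bound the denominator by $\pi(\ev)$; you simply spell out in more detail what the paper asserts in one line (``From the definition of $\SS$ it follows that\dots''). One minor remark: your argument does not actually invoke Theorem~\ref{thm3}---the path-extension reasoning you give uses only the definition $\SS = \{I : \phi(\ev,I) \leq \gs\}$, which is also all the paper uses here.
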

\begin{proof}
From the definition of $\SS$ it follows that all states $x \in \pam \SS$ have the same stationary probability, namely $\pi(x)=Z^{-1} \s^{\gs^2/2-\gs}$. Moreover, the only way to exit from $\SS$ is by deactivating one node and this happens at rate $q(x, x')=\mu=1$, if $x\in \SS$ and $y \in \SS^c$. Hence
\[
\sum_{x \in \SS} \sum_{y \in \SS^c} \pi(x) q(x,y)= Z^{-1} \s^{\gs^2/2-\gs} \sum_{x \in \SS} \sum_{y \in \SS^c} 1 \leq Z^{-1} \s^{\gs^2/2-\gs} |\pam \SS| (\gs^2/2-\gs).
\]
The last inequality comes from the fact that the exit from $\SS$ can occur only by deactivation one node and there are exactly $\gs^2/2-\gs$ of them to be turned off in each configuration in $\pam \SS$. Trivially, $\pi(\SS) \geq \pi(\ev)=Z^{-1} \s^{\gs^2/2}$ and thus
\[
\Phi(\SS) \leq \dfrac{\s^{\gs^2/2-\gs}}{\s^{\gs^2/2}} |\partial^- \SS| (\gs^2/2-\gs) =\dfrac{|\partial^- \SS| (\gs^2/2-\gs)}{\s^{\gs}},
\]
which completes the proof.
\end{proof}

Due to the relation (see~\cite{ZBvL12})
\[
t_{{\rm mix}}(\epsilon) \geq \left(\frac{1}{2}-2\varepsilon\right) \max_{A \subseteq \Omega : \pi(A) \leq 1/2} \frac{1}{\Phi(A)},
\]
the following result is then an immediate consequence of Theorem~\ref{thm3} and Lemma~\ref{lem:cond}.

\begin{theorem}[Lower bound on mixing time]\label{thm:mix}
For any $\s>1$ the mixing time of the process $\{X(t)\}_{t\geq 0}$ satisfies
\[
t_{{\rm mix}}(\e,\s) \geq \left(\frac{1}{2}-2\epsilon\right) \frac{\s^{\gs}}{|\pam \SS|(\gs^2/2-\gs)}.
\]
\end{theorem}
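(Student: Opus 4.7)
The plan is to specialise the Cheeger-type lower bound stated in the excerpt,
\[
t_{\rm mix}(\e) \geq \left(\tfrac12 - 2\e\right) \max_{A \subseteq \Omega:\ \pi(A)\leq 1/2} \frac{1}{\Phi(A)},
\]
to the choice $A = \SS$ with $\SS$ defined in~\eqref{eq:defS}. Lemma~\ref{lem:cond} already supplies the upper bound on $\Phi(\SS)$, so the entire remaining task is to verify the admissibility condition $\pi(\SS)\leq 1/2$ and then substitute.

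Introduce the odd counterpart $\SS' := \{I \in \Omega : \phi(\ov, I) \leq \gs\}$. The first (and conceptually only) step is to argue that $\SS \cap \SS' = \emptyset$. Indeed, an $I \in \SS\cap \SS'$ would yield two paths, one from $\ev$ to $I$ and one from $I$ to $\ov$, on each of which the efficiency gap stays bounded by $\gs$. Concatenating them would produce a path $\o:\ev\to\ov$ with $\max_{J\in\o}\D(J)\leq \gs$, contradicting Theorem~\ref{thm3} which asserts $\G=\gs+1$.

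The second step is a routine symmetry consideration. Translation by $(1,0)$ on the toric grid is a graph automorphism that swaps the even and odd sublattices, in particular mapping $\ev$ to $\ov$. It induces a bijection $\Psi:\Omega\to\Omega$ that preserves the cardinality of each independent set and hence preserves $\pi$ in view of~\eqref{eq:pih}; since $\Psi(\ev)=\ov$ it carries the function $I\mapsto \phi(\ev,I)$ to $I\mapsto \phi(\ov,I)$, and therefore maps $\SS$ bijectively onto $\SS'$. Consequently $\pi(\SS)=\pi(\SS')$, and the disjointness established above gives $2\pi(\SS) \leq \pi(\Omega)=1$, i.e.\ $\pi(\SS)\leq 1/2$.

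Substituting the estimate of Lemma~\ref{lem:cond} into the Cheeger inequality then delivers the stated lower bound. I do not anticipate a genuine obstacle: all the content lies in Theorem~\ref{thm3}, through the disjointness $\SS\cap \SS' = \emptyset$, while the remaining ingredients (the Cheeger inequality and the translational symmetry of the torus) are entirely standard.
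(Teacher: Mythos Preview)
Your proof is correct and follows essentially the same route as the paper, which derives the theorem directly from the Cheeger-type inequality, Lemma~\ref{lem:cond}, and Theorem~\ref{thm3}. You spell out the verification that $\pi(\SS)\leq 1/2$ via the disjointness of $\SS$ and $\SS'$ (from $\G=\gs+1$) together with the even--odd translation symmetry, a step the paper leaves implicit in its surrounding discussion.
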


\begin{remark}
At first sight, our novel approach might look deceptively simple, because no sophisticated enumeration method is needed. In fact, there is no counting problem to be solved. However, the technical challenge is shifted to establishing the communication height in Theorem~\ref{thm3}. Indeed, this result is very powerful, and the proof relies on an approach similar to that in~\cite{B99,G08,R06}. Once the communication height is established, Theorems~\ref{thm1} and~\ref{thm:mix} readily follow. 
\end{remark}

\section{Proof of Theorem~\ref{thm2} (long-term average delay)} 
\label{sec:proofrenewal}

In this section we present the proof of Theorem~\ref{thm2}.
Guided by the intuitive arguments in Subsection~\ref{sub1},
we consider an alternating renewal process. As before, an even renewal
period starts each time we observe a first entrance into the even
dominant state~$\ev$ after a visit to the odd dominant state~$\ov$,
and likewise an odd renewal period starts each time the process enters
the odd dominant state~$\ov$ for the first time after a visit to the
even dominant state~$\ev$.
Thus the lengths of the even and odd renewal periods correspond to the
transition times $\tyx$ and $\txy$, respectively.
We have defined a {\it cycle} as the period consisting of an even and subsequent odd renewal period.
Let $\sy$ and $\uy$ be two random variables representing the amounts
of time during an even renewal period that the activity process
$\{X(t)\}_{t \geq 0}$ spends in the even dominant state~$\ev$ and in other
activity states, respectively.
Since the random variable $\sy$ tracks the total time the process spends in the state $\ev$ in an entire cycle, the renewal-reward theorem yields
\[
\frac{\E \sy}{\E \tyx + \E \txy} = \pi(\ev) \uparrow \frac{1}{2}, \quad \s \to \infty,
\]
where we suppress the implicit dependence on~$\s$ for notational
compactness.
By symmetry, $\E \tyx = \E \txy$, so that
\[
\frac{\E \sy}{\E \tyx} \uparrow 1 \quad \text{ and } \quad \frac{\E \uy}{\E \tyx} \downarrow 0, \quad \s \to \infty.
\]
In particular, for any $\delta > 0$, there exists $\s_\delta > 0$
such that for all $\s > \s_\delta$
\eqn{\label{eq:Udelta}
\frac{\E \uy }{\E \tyx} \leq \delta.
}

As mentioned earlier, by virtue of the symmetry, we may consider the
long-term average delay of an arbitrary packet at a particular node,
say an odd one.
With minor abuse of notation, let
\eqn{\label{eq:lave}
\E L = \lim_{T \to \infty} \frac{1}{T} \int_{t = 0}^{T} L(t) {\rm d} t,
}
with $L(t)$ denoting the queue-length at that particular node at time~$t$,
i.e. the total number of packets, including the packet that may
possibly be in the process of being transmitted.
Assuming that $\E W$ exists, Little's law can then be invoked to
conclude that $\E L = \lambda \E W$ exists as well, so it suffices to
establish a lower bound for $\E L$.

By definition, $L(t)$ increases whenever a packet arrives at the node,
as governed by a Poisson process of rate~$\lambda$.
Further observe that $L(t)$ cannot decrease when the activity process
resides in the even dominant state $\ev$, since that precludes activity
of any odd node.
When the activity process does not reside in the even dominant state,
the node could potentially be active, and $L(t)$ decreases whenever
a transmission is completed and there are packets in the node ($L(t)\neq 0$).
Unfortunately, there are intricate dependencies that arise between
the transmission periods of the node under consideration and the
dynamics of the activity process, and in particular the durations
of time that it does not reside in the even dominant state.
However, the number of transmissions that occur during such a period
is a stopping time, and hence the expected number is bounded from
above by the amount of time divided by the mean transmission time.
Based on the above considerations, we conclude, conditioning on the durations of the even and odd renewal periods, that $\E_{\lambda, \mu} L(t)$ is bounded from below by $Z(t)$,
where the subscripts $\lambda$ and $\mu$ indicate expectation with
respect to the arrival epochs and transmission periods,
and $\{Z(t)\}_{t \geq 0}$ is a process with $Z(0) = L(0)$
that increases at rate~$\lambda$ when the activity process resides in
the even dominant state and decreases at rate $\lambda - \mu$ at all
other times, as long as $Z(t)$ is positive.
Unconditioning, this implies in particular that
\eqn{\label{eq:ineq}
\lim_{T \to \infty} \frac{1}{T} \int_{t = 0}^{T} \E L(t) {\rm d} t \geq \lim_{T \to \infty} \frac{1}{T} \int_{t = 0}^{T} \E Z(t) {\rm d} t.
}

In order to evaluate the latter limit expression,
let $T_k$ and $V_k$ be the durations of the $k$-th even and $k$-th odd
renewal periods, respectively, let $U_k$ be the amount of time that the
activity process does not reside in the even dominant state during the
$k$-th even renewal period, and let $S_K = \sum_{k = 1}^{K} (T_k + V_k)$
be the duration of the first $K$~cycles.
Note that $T_k$ and $V_k$ are i.i.d.~copies of the random variables
$\tyx$ and $\txy$, respectively.
Assume that an even renewal period starts at time~0 with $Z(0) = 0$,
and define the random variable $M = \inf\{K \geq 1: Z(S_K) = 0\}$.
Observe that $S_M$ is a regeneration epoch for the process
$\{Z(t)\}_{t \geq 0}$, and hence the renewal-reward theorem implies that
\eqn{\label{eq:zave}
\lim_{T \to \infty} \frac{1}{T} \int_{t = 0}^{T} Z(t) {\rm d} t = \frac{\E \int_{t = 0}^{S_M} Z(t) {\rm d} t}{\E S_M}.
}

Considering the denominator, it follows from Wald's equation and symmetry
considerations that $\E S_M = \E M (\E \tyx+ \E \txy) = 2 \E M \E \tyx$.
Turning attention to the numerator, we first condition on the number
of cycles~$M$ that have elapsed by the regeneration epoch, the
durations $T_1, T_2, \dots, T_M$ of the even renewal periods involved,
and the amounts of time $U_1, U_2, \dots, U_M$ that the activity
process does not reside in the even dominant state.

\begin{figure}[!ht]
\centering
\includegraphics[scale=0.22]{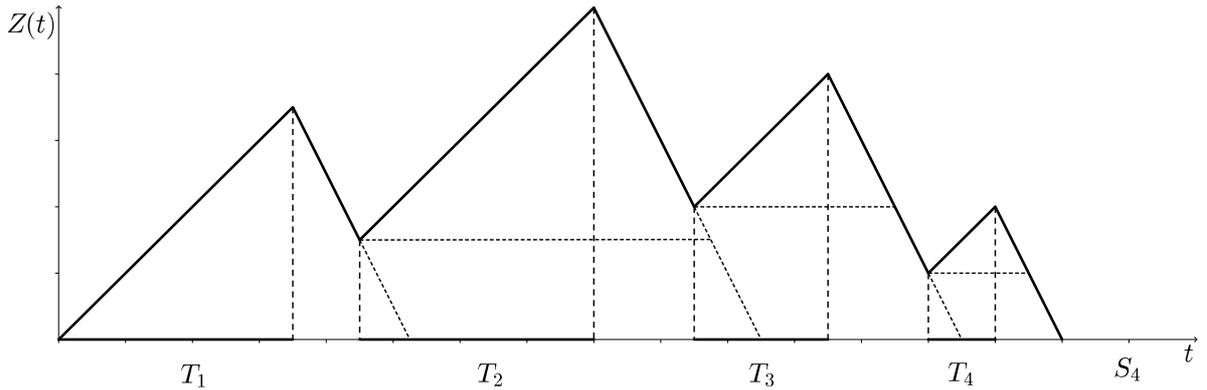}
\caption{Sample path of the process $Z(t)$, with $M=4$.}
\label{fig:zpath}
\end{figure}

When $U_k = 0$, inspection of Figure~\ref{fig:zpath} shows that the area of the
triangle associated with the $k$-th cycle is
\eqn{\nonumber
\frac{1}{2} \lambda T_k^2 +
\frac{1}{2} \frac{\lambda^2}{\mu - \lambda} T_k^2 =
\frac{1}{2} \frac{\lambda \mu}{\mu - \lambda} T_k^2,
}
while the area of the parallelogram associated with the $k$-th cycle is
\eqn{\nonumber
Z_k \Big(T_k + \frac{\lambda}{\mu - \lambda} T_k\Big) =
\frac{\mu}{\mu - \lambda} Z_k T_k,
}
with $Z_k = Z(S_k)$ the value of the process $Z(\cdot)$ at the
start of the $k$-th cycle.

In general, when $U_k$ may not be zero, a similar geometric
construction leads to the conclusion that the two areas are
\eqn{\nonumber
\frac{1}{2} \lambda T_k
\left(T_k - \frac{\mu}{\lambda} U_k\right) +
\frac{1}{2} \frac{\lambda^2}{\mu - \lambda}
\left(T_k - \frac{\mu}{\lambda} U_k\right)^2 \geq
\frac{1}{2} \frac{\lambda \mu}{\mu - \lambda}
\left (T_k - \frac{\mu}{\lambda} U_k\right )^2,
}
and
\eqn{\nonumber
Z_k \Big(T_k + \frac{\lambda}{\mu - \lambda}
\left(T_k - \frac{\mu}{\lambda} U_k\right)\Big) =
\frac{\mu}{\mu - \lambda} Z_k \left (T_k - U_k\right ),
}
respectively. Unconditioning, i.e. taking expectations with respect to the number
of cycles~$M$ and the random variables $T_k$ and $U_k$, $k = 1, \dots, M$,
we find
\eqn{\nonumber
\E \int_{t = 0}^{S_M} Z(t) {\rm d} t \geq
\frac{1}{2} \frac{\lambda \mu}{\mu - \lambda}
\E \Big ( \sum_{k = 1}^{M} (T_k - \frac{\mu}{\lambda} U_k)^2 \Big) +
\frac{\mu}{\mu - \lambda} \E \Big(\sum_{k = 1}^{M} Z_k
(T_k - U_k)\Big),
}
yielding
\begin{align*}
\frac{\E \int_{t = 0}^{S_M} Z(t) {\rm d} t}{\E S_M}
&= \frac{1}{4} \frac{\lambda \mu}{(\mu - \lambda) \E \tyx}
\frac{\E \left( \sum_{k = 1}^{M}
(T_k - \frac{\mu}{\lambda} U_k)^2\right) }{\E M} +
\frac{\mu}{2 (\mu - \lambda) \E \tyx}
\frac{\E \Big( \sum_{k = 1}^{M} Z_k
(T_k - U_k)\Big )}{\E M} \\
&= \frac{1}{4} \frac{\lambda \mu}{(\mu - \lambda) \E \tyx}
\E \Big(\tyx - \frac{\mu}{\lambda} U_\ev \Big)^2 +
\frac{\mu}{2 (\mu - \lambda) \E \tyx}
\E Z (\tyx - U_\ev),
\end{align*}
where the three random variables $\tyx$, $\uy$ and $Z$ have the joint
distribution of the duration of an even renewal period, the amount of
time that the activity process does not reside in the even dominant
state during that period, and the value of the process $Z(\cdot)$ at
the start of that period.

Applying Jensen's inequality and noting that $\tyx$ and $\uy$ are
independent of~$Z$, we obtain
\begin{align*}
\frac{\E \int_{t = 0}^{T_M} Z(t) {\rm d} t}{\E T_M}
&\geq \frac{1}{4} \frac{\lambda \mu}{(\mu - \lambda) \E \tyx}
\left(\E \tyx - \frac{\mu}{\lambda} \E \uy\right)^2 +
\frac{\mu}{2 (\mu - \lambda)\E \tyx}
\E Z \left (\E \tyx - \E \uy\right ) \\
&\geq
\frac{1}{4} \frac{\lambda \mu}{\mu - \lambda} \E \tyx
\Big(1 - \frac{\mu}{\lambda} \frac{\E \uy}{\E \tyx}\Big)^2 +
\frac{\mu}{2 (\mu - \lambda)}
\E Z \Big (1 - \frac{\E \uy}{\E \tyx}\Big).
\end{align*}
In view of~\eqref{eq:Udelta}, we find that for any $\delta > 0$,
the liminf of the latter expression (for large values of~$\s$) is bounded
from below by
\eqn{\nonumber
\frac{1}{4} \frac{\lambda \mu}{\mu - \lambda} \E \tyx
\left(1 - \delta \frac{\mu}{\lambda} \right)^2 +
\frac{\mu}{2 (\mu - \lambda)}
\E Z \left (1 - \delta\right ).
}
Since $Z$ is non-negative and $\delta > 0$ is arbitrary, we deduce
\eqn{\nonumber
\liminf_{\s \to \infty} \frac{\E L}{\E \tyx} \geq
\frac{\lambda}{4} \frac{\mu}{\mu - \lambda},
}
where we also used the fact that the expectation signs in~\eqref{eq:ineq}
can be moved in front of the integrals and then dropped in view
of~\eqref{eq:lave} and~\eqref{eq:zave}.
The statement of the theorem then follows by applying Little's law.

\section{Proof of Theorem~\ref{thm3} (communication height)}
\label{sec:ch}
In this section we obtain the communication height $\G$. In Subsection~\ref{cutset} we introduce cutsets and contours, which we use in Subsection~\ref{clustricro} to distinguish among feasible configurations. We then explore the geometric properties of the set $\SS$ in Subsection~\ref{setS} and we present the proof of Theorem~\ref{thm3} in Subsection~\ref{proofthm3}.

\subsection{Cutsets and contours}
\label{cutset}
We start by introducing some notation. Given $S \subseteq V$, define $S^c:=V\setminus S$, the set $\pam S$ of vertices in $S$ which are adjacent to a vertex in $S^c$, the set $\pap S$ of vertices in $S^c$ which are adjacent to a vertex in $S$ (in particular for $v\in V$, we write $\partial v$ for $\pap \{v\}$) and the sets $S^+:=S \cup \pap S$, $S^\ev:=S \cap \ev$ and $S^\ov:=S \cap \ov$. Let $\nabla S:=\{(v,w) \in E ~|~ v \in \pam S, \, w \in \pap S\}$ denote the set of edges having one end in $S$ and the other in $S^c$.

An independent set $I \in \Omega$ consists of vertices in $I^\ev$ and/or vertices in $I^\ov$. In between these regions there is always an unoccupied two-layer interface, which corresponds to a collection of edge cutsets. Recall that $\g \subseteq E$ is an \textit{edge cutset} if the graph $(V,E\setminus \g)$ is disconnected and it is said to be \textit{minimal} if it is inclusion-wise minimal. We describe a way of associating with each independent set $I \in \Omega$ a collection of minimal edge cutsets, following the approach and some ideas of~\cite{B99, G08, R06}.

For a given $I\in \Omega$, the vertices in $V$ can be partitioned in two sets, $\RO(I)=I^\ov \cup (\ev \setminus I^\ev)$ and its complement $\RE(I)=I^\ev \cup (\ov \setminus I^\ov)$. The collection $\RO(I)$ ($\RE(I)$) consists of one or more connected components to which we will refer as \textit{odd regions} (\textit{even regions}). From this point onward our focus will be on the odd regions only and, unless stated otherwise, all regions that will be mentioned will be odd. 

For each region $R \subseteq \RO(I)$, we consider the corresponding edge cutset $\g_R:=\nabla(R)$, which can consist of several disjoint components. A well-known identity relates the size of a specific edge cutset to the number of even and odd vertices present in its interior, see~\cite{BGTR12,B99,G08}. This can be generalized and, in the case of a region $R\subseteq \RO(I)$, it reads
\eqn{\label{eq:ci}
|\g_R|=|\nabla(R)|=4(|R^\ev|-|R^\ov|).
}
One way to interpret~\eqref{eq:ci} is the following: For a given region $R \subseteq \RO(I)$, $|\g_R|$ is equal to $4$ times the difference between the number of even inactive nodes in $R$, i.e. $|R^\ev|$, and the number of odd active nodes in $R$, i.e. $|R^\ov|$. With this interpretation in mind, the idea behind the proof is easily explained. Indeed one way to count all the edges of $\nabla(R)$, i.e. the ``outgoing'' edges of $R$, is counting first how many edges the subgraph induced by $R^+$ has and then subtracting the number of its ``internal'' edges. The total number of edges of $R^+$ is $4|R^\ev|$, while the number of internal ones is $4|R^\ov|$, since each of them has precisely one endpoint in $R^\ov$. 

Consider the dual graph $G'$ of the graph $G$, which is a discrete torus of the same size. For every region $R \subseteq \RO(I)$, we associate with the edge cutset $\g_R$ the edge set $c_R$ on $G'$ which consists of all the edges of $G'$ orthogonal to edges in $\g_R$. The set $c_R$, to which we will refer as the \textit{contour} of $R$, is a collection of one or more disjoint piecewise linear closed curves. By construction $|c_R|=|\g_R|$ and $|c_R|$ has the natural interpretation as the total length of the contour of $R$. Define $\LL(I):=\bigcup_{R \subseteq \RO(I)} c_R$ to be the collection of contours of all the regions in $\RO(I)$ and $l(I)$ to be their total length, i.e. $l(I):=\sum_{R \subseteq \RO(I)} |c_R|$. The total contour length of a configuration $I\in \Omega$ is related to its efficiency gap $\D(I)$ as follows:
\eqn{\label{eq:contour}
l(I)= 4\cdot \D(I),
}
which follows from~\eqref{eq:ci} and the definition of $\RO(I)$. Indeed
\eqan{
l(I)
&=\sum_{R \subseteq \RO(I)} |c_R| =\sum_{R \subseteq \RO(I)} |\g_R| =\sum_{R \subseteq \RO(I)} 4(|R^\ev |-|R^\ov|) =4 \,\Big ( \sum_{R \subseteq \RO(I)} |R^\ev |-\sum_{R \subseteq \RO(I)} |R^\ov | \Big )\nonumber \\
&=4 \left ( |\RO(I)^\ev |-|\RO(I)^\ov | \right )=4 \left (|\ev\setminus I^\ev|-| I^\ov |\right )=4 \left (|\ev|-|I|\right )=4 \cdot \D(I). \nonumber
}

To help the reader, we will present later some examples of independent sets of the graph $G$, with the corresponding regions and contours. Displaying a configuration $I\in\Omega$, a vertex is drawn as a disk, if it belongs to $I$, and otherwise as a circle. Moreover we use the following color conventions:
\begin{center}
{\renewcommand{\arraystretch}{1}
  \begin{tabular}{| c | c | c | }
    \hline
    vertex		& odd		& even 	      \\ \hline
    active 		& $\evena$ 	& $\odda$  \\ \hline
    inactive 	& $\eveni$	& $\oddi$ \\
    \hline
  \end{tabular}
}
\end{center}
With these conventions, all regions in $\RO(I)$ end up being colored in black (and those in $\RE(I)$ in gray).

\subsection{Clusters, stripes and crosses}
\label{clustricro}
In this subsection we present a partition of the state space $\Omega$ exploiting geometric properties of the configurations. Let $\Lambda_\ov$ be the graph whose vertex set is $\ov$ and such that two odd vertices are connected if and only if they have distance $2$ in $G$ and define $\Lambda_\ev$ analogously. A closed path or curve is said to be \textit{contractible} in a graph $H$ if it can be continuously deformed or contracted in $H$ to a single vertex. Given an independent set $I \in \Omega$, a region $R\subseteq \RO(I)$ is called
a \textit{stripe} if there exists a curve in $c_R$ which is non-contractible in $G'$,
a \textit{cross} if all the curves in $c_R$ are contractible in $G'$ and there is a non-contractible closed path in $\Lambda_\ov \cap R^\ov$, and a
\textit{cluster} if all the curves in $c_R$ are contractible in $G'$ and all the closed paths are contractible in $\Lambda_\ov \cap R^\ov$. Notice that if $R$ is a stripe, by parity there must be two non-contractible closed curves in $c_R$. If instead $R$ is a cross, then one can show that there exist two non-contractible paths in $\Lambda_\ov \cap R^\ov$ which are not homotopic and intersect each other (this motivates the name \textit{cross}). Figure~\ref{fig:regions} below shows some examples of these types of regions.

\begin{figure}[!ht]
\centering
\subfigure[Cluster]{
\includegraphics[angle=90,scale=0.19]{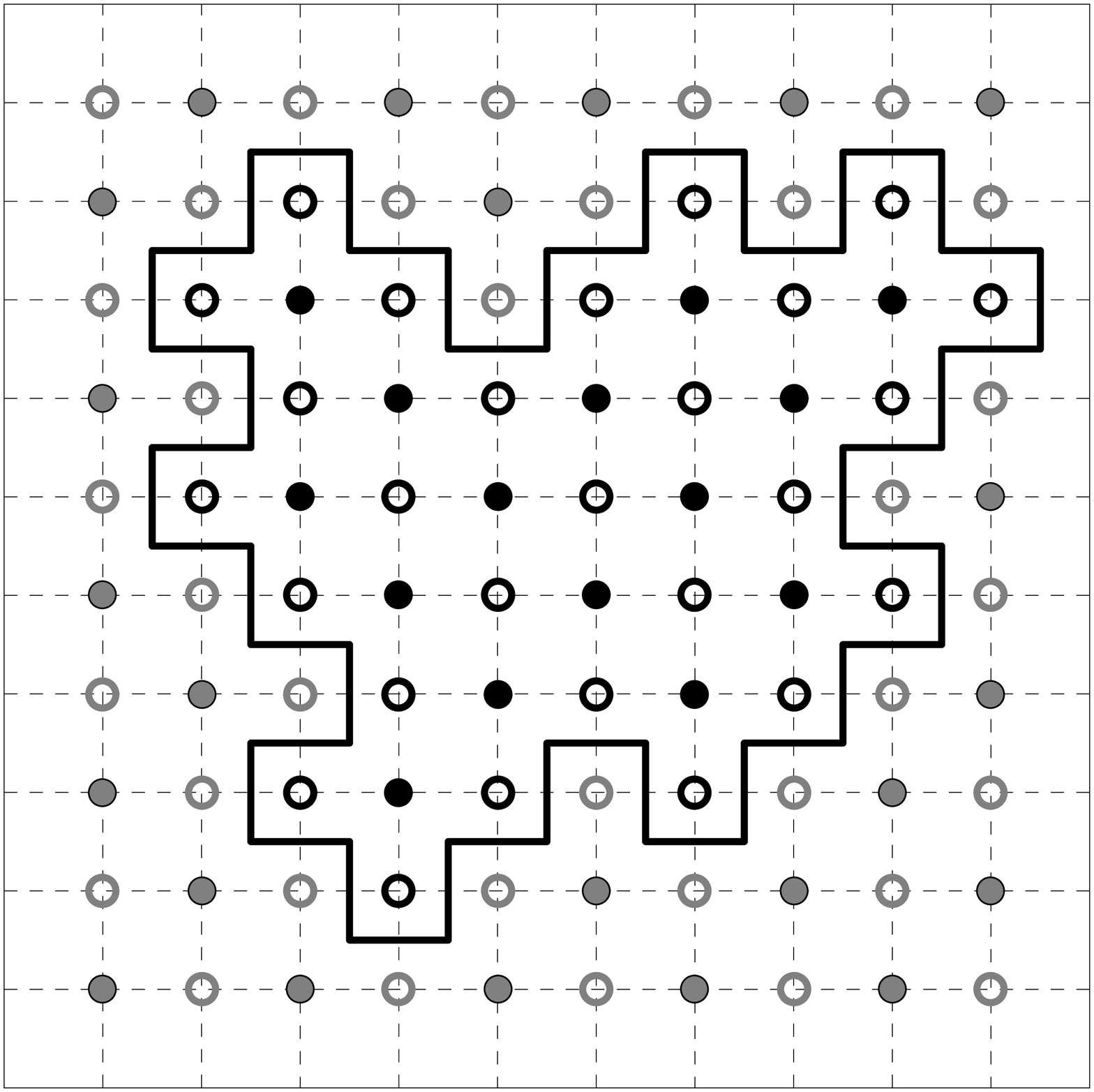}
}
\hspace{-0.9cm}
\subfigure[Cluster with holes]{
\includegraphics[angle=90,scale=0.19]{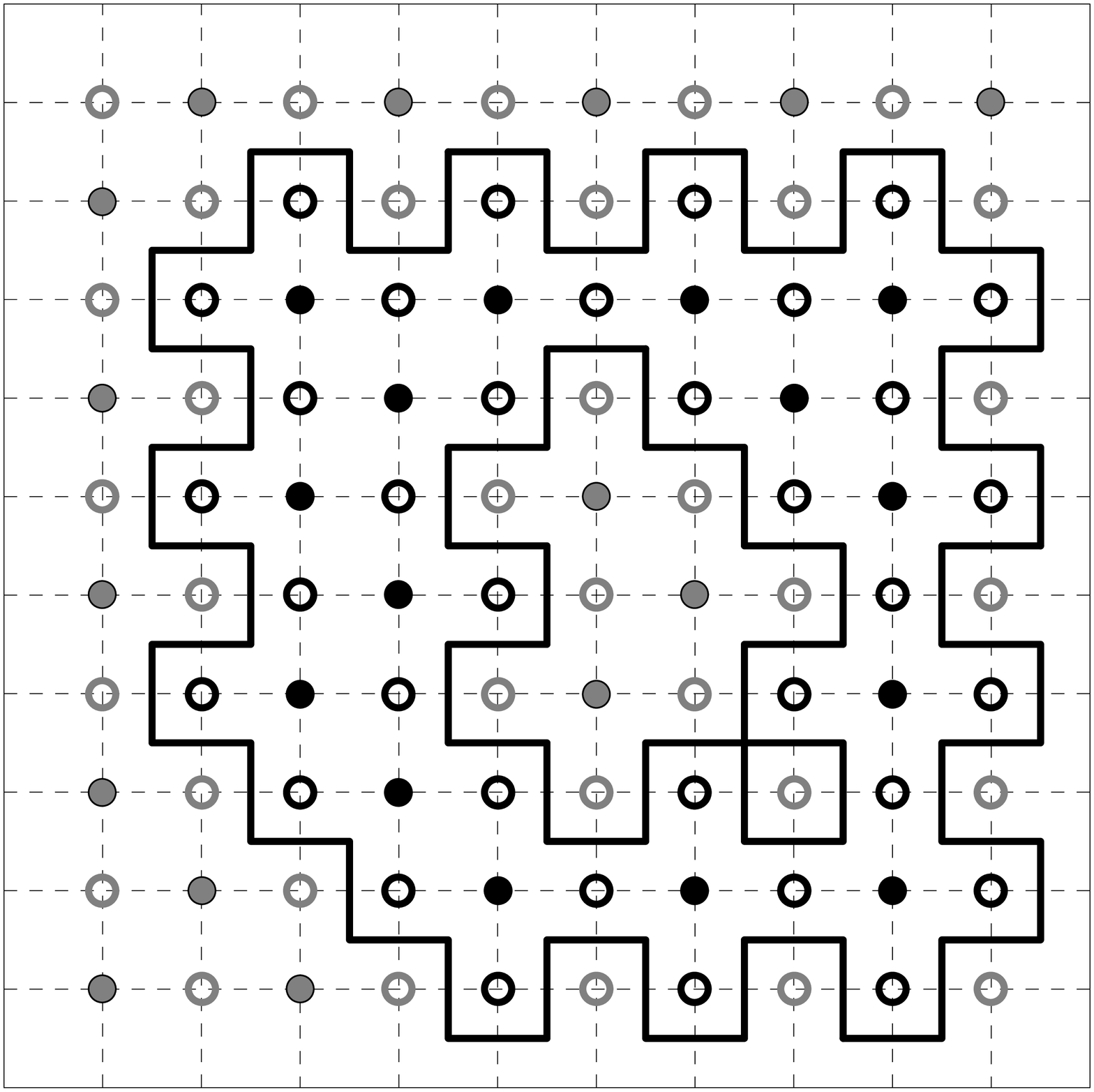}
}
\hspace{-0.9cm}
\subfigure[Cross $\ov$]{
\includegraphics[scale=0.19]{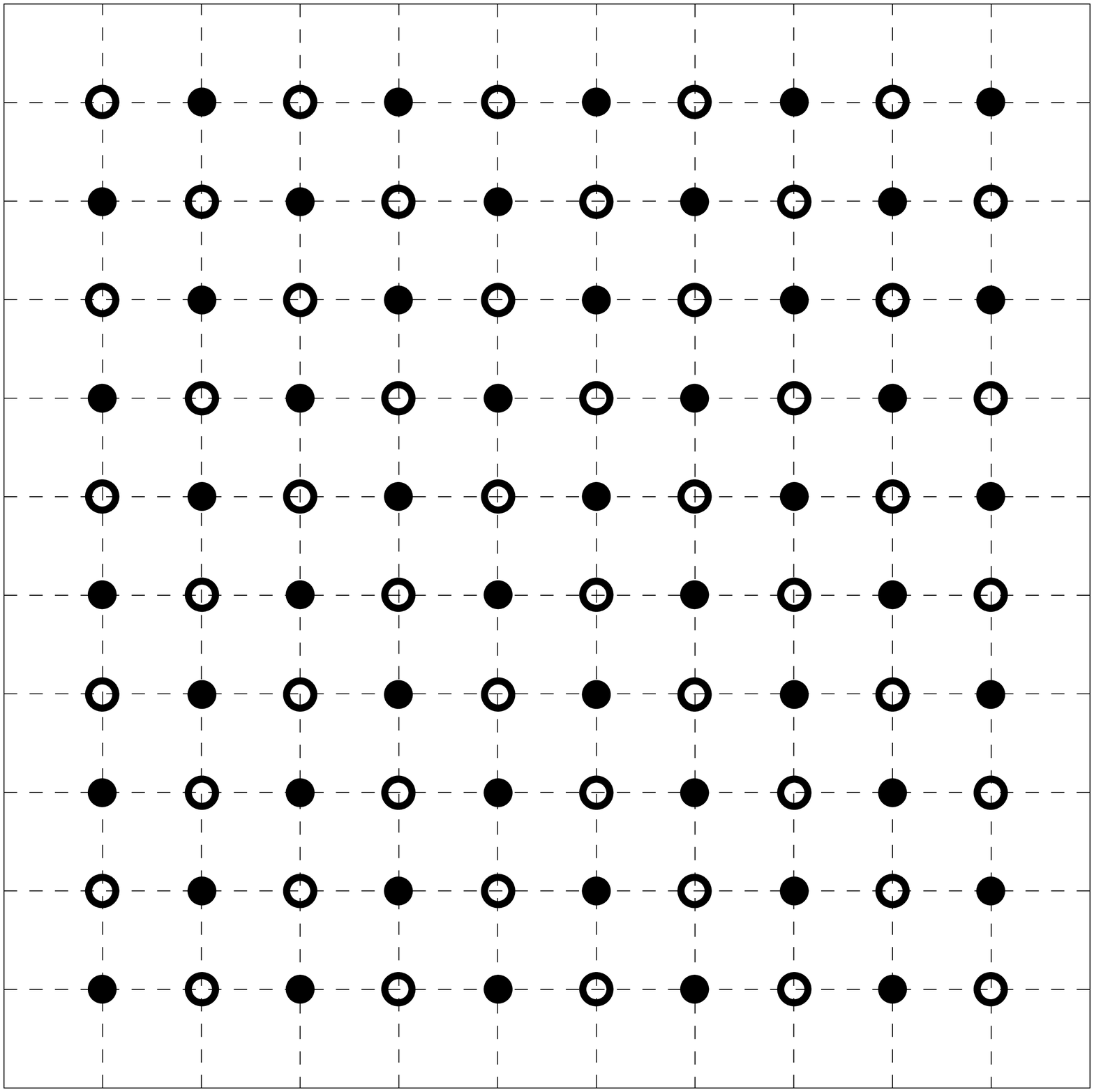}
}
\\
\centering
\subfigure[Cross]{
\includegraphics[angle=90,scale=0.19]{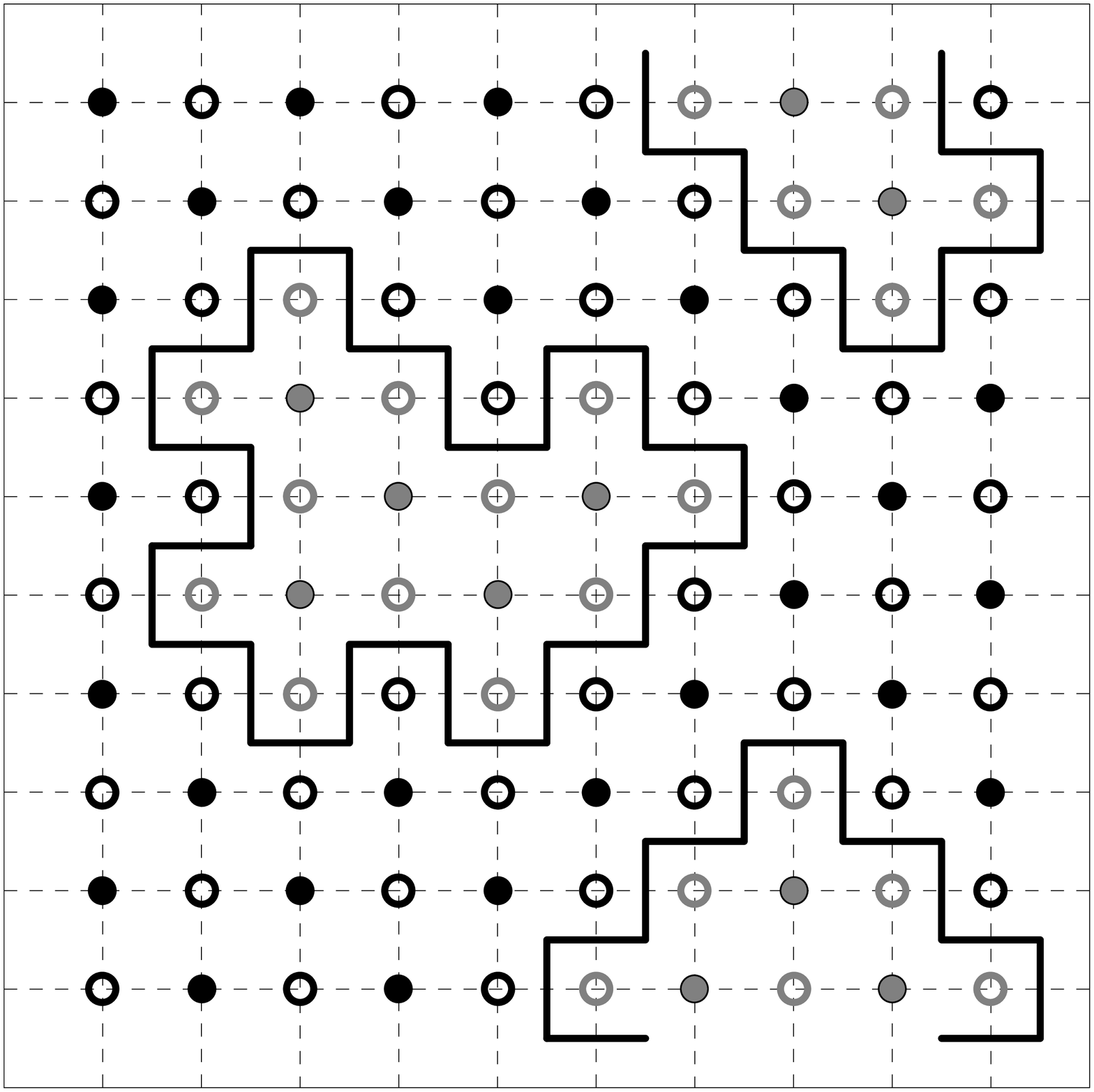}
}
\hspace{-0.9cm}
\subfigure[Stripe]{
\includegraphics[angle=90,scale=0.19]{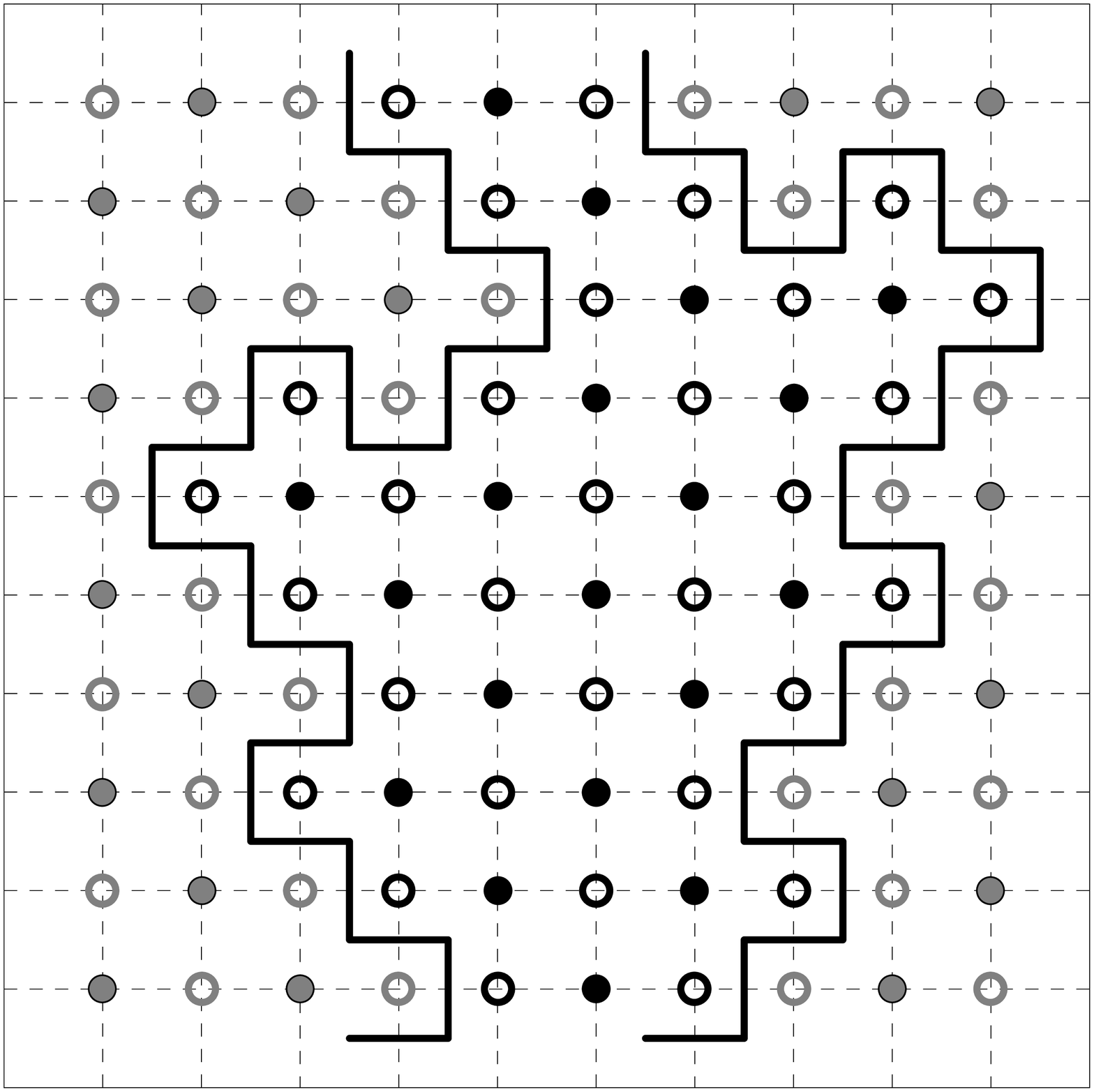}
}
\hspace{-0.9cm}
\subfigure[Stripe with holes]{
\includegraphics[angle=90,scale=0.19]{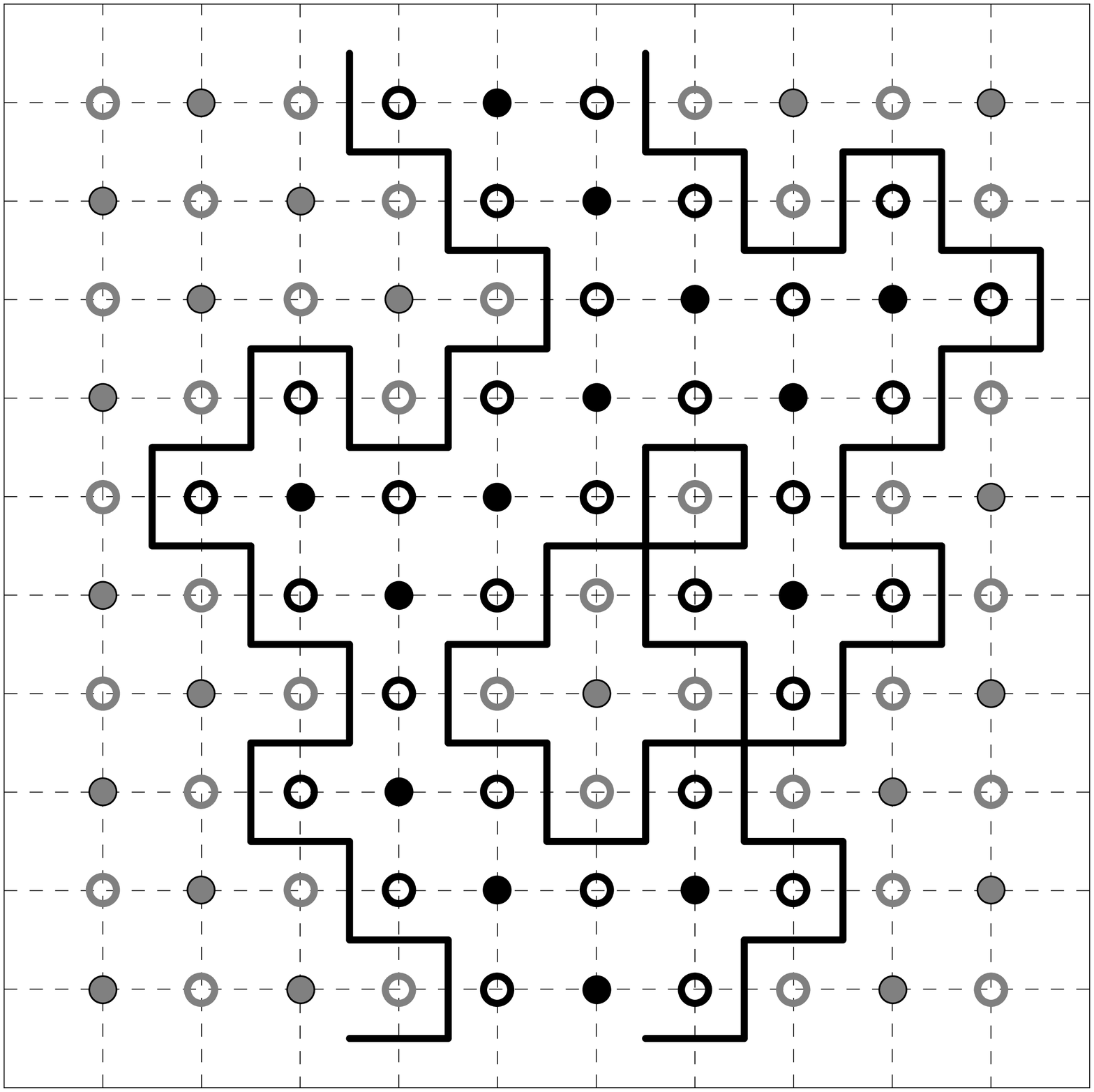}
}\\
\caption{Examples of configurations in $\Omega$ for $L=10$, with the respective contours.}
\label{fig:regions}
\end{figure}

We partition the state space $\Omega$ into the following three subsets: $\Omega_{s}$ the collection of independent sets which have at least one stripe; $\Omega_{cr}$ the collection of independent sets which have a cross; $\Omega_{cl}$ the collection of independent sets whose regions, if any, are clusters only. The subset $\Omega_{cl}$ is disjoint from the other two by definition. Then, to guarantee that $\Omega_{s} \cup \Omega_{cr} \cup \Omega_{cl}$ is indeed a partition of $\Omega$, we need to prove that there are no configurations $I\in\Omega$ which display simultaneously a cross and a stripe, i.e. $\Omega_{s} \cap \Omega_{cr}= \emptyset$. This is indeed the case, since the non-contractible curve, which the contour of a stripe displays, is surrounded by an unoccupied two-layer interface that precludes the existence of one the two non-contractible paths on $\Lambda_\ev$ which a cross must have. 

Denote by $A \triangle B$ the symmetric difference between two sets $A$ and $B$. Consider the collection of independent sets which exhibit a \textit{critical cross}, defined as $\Omega_{cc}:=\{ I \in \Omega_{cr} ~|~ \exists \, I' \in \Omega_{cl} : |I \triangle I'|=1 \}$. In words, a critical cross is a cross which can be ``broken'' deactivating just a single node directly into clusters, without becoming a stripe as intermediate stage. 

The next lemma gives some lower bounds on the length of the contour of certain type of regions, and will be crucial to understand the geometrical properties of the set $\SS$.

\begin{lemma}[Minimal contour length for stripes and critical crosses]\label{lem:contourstripecc}
Let $R \subseteq \RO(I)$ be a region, for some $I\in\Omega$. If $R$ is a stripe, then $|c_R| \geq 4 \gs$. If $R$ is a critical cross, then $|c_R|\geq 8 \gs -12$.
\end{lemma}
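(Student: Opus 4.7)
The plan is to prove both bounds using the identity $|c_R|=4(|R^\ev|-|R^\ov|)$ from~\eqref{eq:ci}, combined with a parity-enforced structure on the contour curves in the dual graph $G'$.

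\textbf{Stripe bound.} First I would argue that every edge of $\gamma_R$ must join a vertex of $R^\ev$ (even inactive) to a vertex of $R^c\cap\ov$ (odd inactive): an odd active vertex in $R^\ov$ cannot neighbor an even active vertex in $R^c$ in an independent set. Orient any simple closed curve $\gamma\subseteq c_R$ in $G'$ so that $R$ lies consistently on its left; then the $R$-side endpoint of every $G$-edge crossed by $\gamma$ is always even. Two consecutive dual steps in the same cardinal direction would demand two $G$-vertices differing by one in a single coordinate to both be even, contradicting the checkerboard parity. Hence $\gamma$ must alternate horizontal and vertical dual steps. A non-contractible $\gamma$ with, say, homology class $(1,0)$ needs net east displacement $\gs$ and therefore at least $\gs$ east-type steps, each of which must be followed by a transverse step, giving $|\gamma|\geq 2\gs$. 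Finally, $c_R=\partial R$ is null-homologous on the torus, so any non-contractible curve in $c_R$ must be matched by a second one in the same homology class; a stripe thus carries at least two such curves and $|c_R|\geq 4\gs$.

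\textbf{Critical cross bound.} Here $c_R$ is entirely contractible, so the topological pairing above is unavailable, and I would instead directly show $|R^\ev|-|R^\ov|\geq 2\gs-3$. Since $R$ is a cross it contains two non-homotopic non-contractible paths $P_h,P_v$ in $\Lambda_\ov\cap R^\ov$. The criticality hypothesis forces these two paths to share at least one odd active vertex $v$: a single deactivating flip can destroy both non-contractible loops only if the flipped vertex lies on both. Each of $P_h,P_v$, together with the two-layer even-inactive buffer that the independent-set constraint forces around it, contributes at least $\gs$ to $|R^\ev|-|R^\ov|$ via the same row/column count that underlies the stripe case (the two buffer rows or columns each supply $\gs/2$ unmatched even inactives). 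Summing horizontal and vertical contributions gives $\geq 2\gs$, from which one must subtract the overlap at the intersection. The overlap is localized to $v$ and its immediate neighborhood: $v$ itself contributes one doubly counted odd active, and $v$'s four even-inactive neighbors contribute four doubly counted even inactives, for a total correction of $4-1=3$. Criticality prevents further shared structure, as a longer shared sub-arm would admit an independent simplifying flip elsewhere, contradicting the ``barely critical'' nature of the configuration. This yields $|R^\ev|-|R^\ov|\geq 2\gs-3$ and hence $|c_R|\geq 8\gs-12$, tightly realized by the plus-shaped critical cross consisting of one odd active row and one odd active column meeting at $v$ together with their buffers.

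The main obstacle is the critical cross bound. The stripe part falls out cleanly from the parity-forces-alternation observation plus a homological pairing; no delicate constant needs to be extracted. The critical cross, by contrast, has a contractible contour so topological arguments on $c_R$ are unavailable, and one must instead quantify two spanning substructures simultaneously and pin down the exact constant $-3$ by a careful bound on the overlap at their intersection, using criticality to rule out larger overlaps. The overall framework fits the cutset/contour tradition of~\cite{B99,G08,R06}, with the bipartite zigzag constraint providing the length-doubling that produces both the $4\gs$ and $8\gs$ leading terms.
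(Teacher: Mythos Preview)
Your stripe argument is correct and essentially coincides with the paper's: both exploit that the parity of the $R$-side endpoint forces the dual contour to alternate horizontal and vertical steps (whence each non-contractible component has length $\geq 2L$), and that $c_R$, being a boundary, is null-homologous (whence such components come in pairs).

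Your critical-cross argument, however, takes a genuinely different route from the paper and has a real gap. You try to bound $|R^\ev|-|R^\ov|$ directly by summing a ``contribution $\geq L$'' from each of $P_h,P_v$ and subtracting an overlap of exactly~$3$. Two problems arise. First, the overlap bound is not justified: even if $P_h\cap P_v=\{v\}$, the two paths may come within distance~$2$ in $G$ at other places, making $\partial P_h\cap\partial P_v$ strictly larger than $\partial v$; the sentence ``criticality prevents further shared structure'' is an assertion, not a proof --- criticality only says some single flip lands in $\Omega_{cl}$, it imposes no minimality on the cross away from the flipped vertex. Second, and more fundamentally, your count controls $|N^\ev|-|N^\ov|$ for $N:=P_h\cup P_v\cup\partial(P_h\cup P_v)$, not $|R^\ev|-|R^\ov|$. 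The region $R$ may contain many odd active vertices outside $P_h\cup P_v$, and there is nothing in your argument that matches them against extra even inactives; the inequality $|R^\ev|-|R^\ov|\geq |N^\ev|-|N^\ov|$ does not follow from $N\subseteq R$.

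The paper avoids both difficulties by passing to the neighbouring configuration $I'\in\Omega_{cl}$ obtained by deleting the critical odd vertex~$v$. There the cross splits into at least two clusters $R_1,R_2$, each one step short of being a stripe in a different direction; hence each has projection $L-1$ in one axis, and the staircase constraint gives $|c_{R_i}|\geq 4(L-1)$. Summing and using $l(I)=l(I')-4$ yields $l(I)\geq 8L-12$. The point is that this argument works with the \emph{total} contour length and with entire regions of $I'$, so no overlap accounting and no control over $R\setminus N$ is needed.
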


This lemma implies that if $I$ has a stripe, then $\D(I) \geq \gs$, while if $I$ has a critical cross, then $\D(I) \geq 2\gs-3$.

\begin{proof}
{\rm a)} Consider a stripe $R$. By definition, its contour $c_R$ has at least one non-contractible curve, but by parity, there are two disjoint non-contractible curves. A non-contractible curve has length at least $2 \gs$, because it should be long enough in one direction to wind around the $L\times L$ toric grid. Moreover, every curve of the contour has by construction an equal number of vertical and horizontal edges, which means that both curves have length at least $2 \gs$. Hence $ |c_R| \geq 4 \gs$ and the proof is concluded.

{\rm b)} Let $I\in\Omega_{cc}$ be an independent set which exhibits a critical cross $R$ and consider a configuration $I' \in \Omega_{cl}$ such that $|I \triangle I'|=1$, i.e. $I' \in \pap \Omega_{cc}$. A single cluster cannot evolve in a single step into a cross, so the configuration $I'$ must have at least two clusters, say $R_1$ and $R_2$, from which the critical cross $R$ will be created. Moreover such clusters must have specific features: $R_1$ and $R_2$ must miss the same odd vertex $v \notin I'$ to become (individually) a stripe, see Figure~\ref{fig:criticalcrossandneigh} below. This means that $R_1$ and $R_2$ are clusters ``stretched in different directions''. More precisely, one of them, say $R_1$, has a projection of length $\gs-1$ in the horizontal direction and the other one, $R_2$, a projection of length $\gs-1$ in the vertical direction. This fact implies that both $R_1$ and $R_2$ have a contour of length at least $4(\gs-1)$. Indeed to have a close curve, at each latitude and longitude there must be two edges, and a contour has the same number of vertical and horizontal edges. Thus $l(I') \geq |c_{R_1}|+|c_{R_2}|\geq 8(\gs-1)$ and, by construction, $l(I)=|c_{R_1}|+|c_{R_2}|-4 \geq 8(\gs-1)-4 = 8\gs -12$.
\end{proof}

\begin{figure}[!ht]
\centering
\subfigure[Configuration $I\in \Omega_{cc}$]{
\includegraphics[angle=90,scale=0.19]{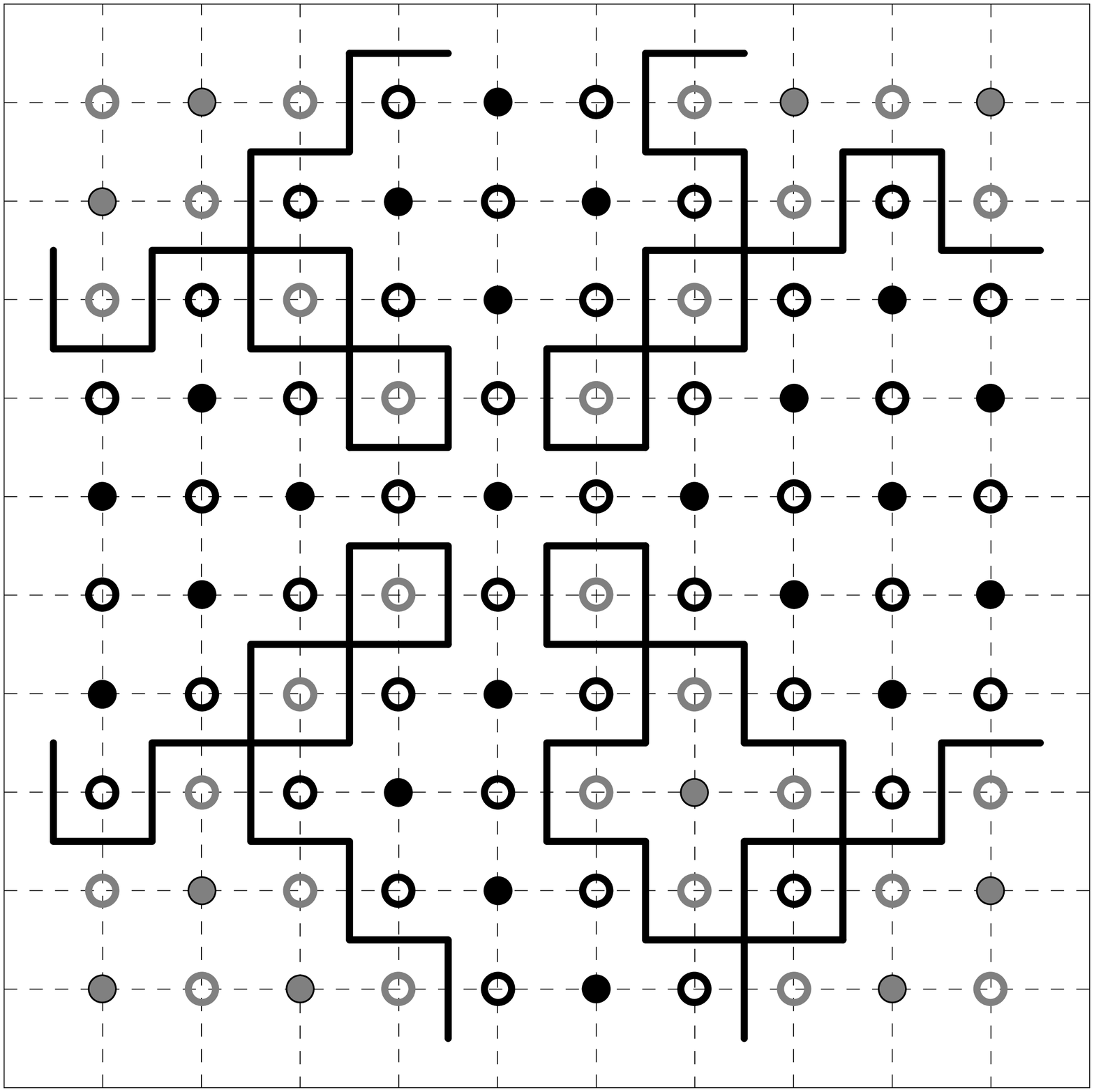}
}
\,
\subfigure[Configuration $I' \in \pap \Omega_{cc}$, $|I \triangle I'|=1$]{
\includegraphics[angle=90,scale=0.19]{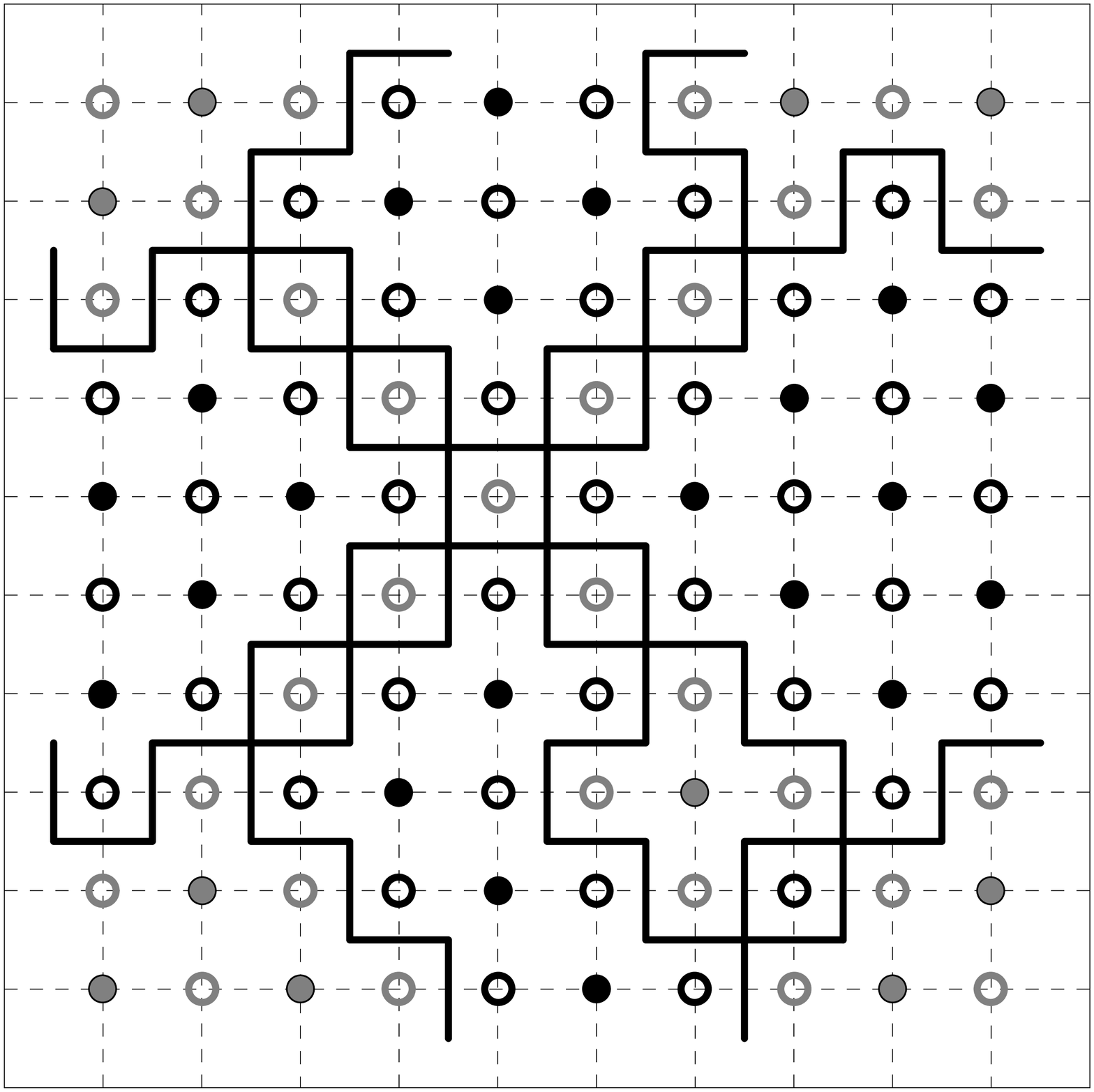}
}
\label{fig:criticalcrossandneigh}
\end{figure}

\subsection{The set \texorpdfstring{$\SS$}~ and its properties}
\label{setS}
Consider two configurations $I,I' \in \Omega$, $I \neq I'$. By construction, if the Markov process $\{X(t)\}_{t\geq 0}$ can make the transition $I \to I'$ in a single step, then $ |I \triangle I'|=1$ and hence $\D(I')=\D(I)\pm 1$. In particular $\D(I')\neq\D(I)$.

The {\it bottom} $\cF(S)$ of a non-empty set $S \subseteq\Omega$ is the set of the largest independent sets in $S$, i.e. $\cF(S) = \{ I \in S : \D(I)=\min_{I' \in S} \D(I') \}$. The independent sets in $\cF(S)$ have all the same efficiency gap, which we will denote by $\D(\cF(S))$. Thanks to~\eqref{eq:contour} and the definition of the communication height, for any integer $c$
\eqn{\label{eq:chcl}
\phi(I,I') \leq c \Leftrightarrow  \D(J) \leq 4 \cdot c, \qquad \forall \, J \in \o: I \to I'.
}

A {\it non-trivial cycle} is a connected set $C$ such that $\max_{I \in C} \D(I) < \D(\cF(\pap C))$. Any singleton that is not a non-trivial cycle is called {\it trivial cycle}. The {\it depth} of a cycle $C$ is given by $D(C):=\D(\cF(\pap C))-\D(\cF(C))$.

Recall the definition~\eqref{eq:defS} of the set $\SS=\{ I \in \Omega ~|~ \phi(\ev,I) \leq \gs \}$. Thanks to~\eqref{eq:chcl}, the condition $ \phi(\ev,I) \leq \gs$ is equivalent to $l(I') \leq 4 \cdot \gs$ for all configurations $I' \in \o: \ev \to I$. Moreover, $\SS \neq \Omega$, because for instance an independent set $J$ consisting of a single vertex does not belong to $\SS$. Indeed $\D(J)=\gs^2/2-1 > \gs$ for $\gs > 2$. This fact and the irreducibility of the process on $\Omega$ imply that $\pap \SS \neq \emptyset$.

\begin{lemma}[Stripes and crosses do not belong to $\SS$]\label{lem:nosnocc} Consider an independent set $I \in \Omega$.
\begin{itemize}
\item[{\rm a)}] If $I$ has at least one stripe, then any path $\o: I \to \ev$ satisfies $\max_{I' \in \o} \D(I') > \gs$.
\item[{\rm b)}] If $I$ has a cross, then any path $\o: I \to \ev$ satisfies $\max_{I' \in \o} \D(I') > \gs$.
\end{itemize}
\end{lemma}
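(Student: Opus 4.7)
I would prove both parts together by fixing an arbitrary path $\omega=(I_0=I,I_1,\ldots,I_n=\ev)$ and arguing by contradiction, assuming $\max_{I'\in\omega}\D(I')\le\gs$. Since $\D$ changes by $\pm 1$ at each single-flip step, the strategy is to locate the first step at which the topological feature (stripe in part~(a) or cross in part~(b)) is lost and then exhibit a contradiction with the budget~$\gs$.

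For part~(a), let $k$ be the smallest index with $I_k\notin\Omega_s$. By Lemma~\ref{lem:contourstripecc} we have $\D(I_{k-1})\ge\gs$, so our assumption forces $\D(I_{k-1})=\gs$ and $l(I_{k-1})=4\gs$. Because the stripe $R$ of $I_{k-1}$ alone contributes at least $4\gs$ to $l(I_{k-1})$, it must be the unique odd region of $I_{k-1}$ and $|c_R|=4\gs$. The flip $I_{k-1}\to I_k$ cannot be a deactivation (which would yield $\D(I_k)=\gs+1>\gs$), so it activates some inactive vertex $v$ whose four neighbors are all inactive. If $v$ is even, then $v\in\ev\setminus I^\ev\subseteq\RO=R$, and since a stripe cannot be a singleton region, $v$ has at least one neighbor in $R^\ov\subseteq I^\ov$, blocking activation. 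If $v$ is odd, then all four (even) neighbors of $v$ must lie in $\ev\setminus I^\ev\subseteq R$, so the four dual edges of the plaquette around $v$ all belong to $c_R$. This plaquette forms the part of $c_R$ that separates $R$ from the isolated component $\{v\}$ of $R^c$, whereas the two non-contractible curves of length at least $2\gs$ produced in the proof of Lemma~\ref{lem:contourstripecc} lie in the part of $c_R$ separating $R$ from the outer component of $R^c$. These two edge sets are boundaries of $R$ against distinct connected components of $R^c$ and are therefore disjoint, giving $|c_R|\ge 4\gs+4$, which contradicts $|c_R|=4\gs$.

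For part~(b), let $k$ be the smallest index with $I_k\notin\Omega_{cr}$. Since $\Omega=\Omega_s\cup\Omega_{cr}\cup\Omega_{cl}$ is a partition, either $I_k\in\Omega_s$ or $I_k\in\Omega_{cl}$. In the first case, applying part~(a) to the sub-path $(I_k,\ldots,I_n)$ yields some $I'\in\omega$ with $\D(I')>\gs$, contradicting the assumption. In the second case, $I_{k-1}\in\Omega_{cr}$ and $|I_{k-1}\triangle I_k|=1$ with $I_k\in\Omega_{cl}$, so by definition $I_{k-1}\in\Omega_{cc}$, and Lemma~\ref{lem:contourstripecc} gives $\D(I_{k-1})\ge 2\gs-3>\gs$ for $\gs\ge 4$, again a contradiction. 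The main technical hurdle is the odd-activation case in part~(a): one must verify that the inner plaquette around $\{v\}$ and the outer non-contractible part of $c_R$ contribute additively to $|c_R|$, which ultimately reduces to the observation that these two edge sets are boundaries of $R$ against distinct connected components of $R^c$ and hence edge-disjoint.
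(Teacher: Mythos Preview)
Your proof is correct, and part~(b) is essentially the same as the paper's. In part~(a), however, you take a genuinely different route.

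The paper stops the path at the \emph{first} index $m$ with $I_m\in\Omega_{cl}$; then $I_{m-1}$ either has a stripe or is a critical cross. In the stripe case the paper simply asserts that ``adding an (even or odd) vertex cannot break the stripe'', so the step $I_{m-1}\to I_m$ must be a deactivation, giving $\D(I_m)=\D(I_{m-1})+1\ge L+1$. In the critical-cross case Lemma~\ref{lem:contourstripecc} gives $\D(I_{m-1})\ge 2L-3>L$.

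You instead stop at the first exit from $\Omega_s$ and, in the tight case $\D(I_{k-1})=L$, rule out activation directly: an even activation is blocked because the unique odd region $R$ has no isolated even vertex, and an odd activation would make $\{v\}$ a component of $R^c$, so the plaquette around $v$ and the two non-contractible boundary curves contribute additively to $|c_R|$, forcing $|c_R|\ge 4L+4$. This last step leans on the paper's description of $c_R$ as ``a collection of \emph{disjoint} piecewise linear closed curves'': with that convention the plaquette is one closed curve of $c_R$ (the boundary of $R$ against the component $\{v\}$), contractible and hence distinct from---and edge-disjoint with---the two non-contractible curves. Your remark that these edge sets are boundaries of $R$ against distinct components of $R^c$ is the right justification; just note that it uses this curve convention implicitly, since at possible pinch points (an inactive diagonal neighbour of $v$) the decomposition of $c_R$ into closed curves is otherwise ambiguous.

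The trade-off: your argument is more self-contained, effectively supplying a proof of the claim the paper leaves as an assertion, at the price of the contour-disjointness detail; the paper's argument is shorter but rests on an unproved topological step. Both lead to the same bound $\D>L$.
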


\begin{proof}
{\rm a)} Consider a path $\o: I \to \ev$, $\o=(I_1,\dots,I_n)$, $n\in\N$, $I_1=I$, $I_n=\ev$. There exists $1 \leq m \leq n-1$ such that $I_m$ is the first configuration belonging to $\Omega_{cl}$ along the path $\o$. In the configuration $I_{m-1}$ there is either a stripe $S$ or a critical cross $C$.

Suppose that $I_{m-1}$ has a stripe $S$. Its contour satisfies $|c_S|\geq 4 \gs$, as proved above in Lemma~\ref{lem:contourstripecc}. Due to our assumptions, the stripe $S$ must ``break'' in a single step into clusters. Recall that in a single transition, we can either add or remove a vertex to the current independent set. The action of adding an (even or odd) vertex cannot break the stripe $S$. Therefore, the only way to break a stripe $S$ is by removing at least one vertex (on its inside), obtaining a new independent set $I'$ which satisfies $\D(I')=\D(I)+1\geq \gs+1$.

Suppose instead that $I_{m-1}$ has a cross $C$. By construction, $C$ must be a critical cross and then Lemma~\ref{lem:contourstripecc} implies that $\D(I_{m-1}) \geq 2\gs -3$, which is greater than $\gs$, since $\gs \geq 4$.

{\rm b)} Consider a path $\o': I \to \ev$, $\o'=(I_1,\dots,I_{n'})$, $n'\in\N$, $I_1=I$, $I_{n'}=\ev$. There exists $1 \leq m' \leq n'-1$ such that $I_{m'}$ is the first configuration belonging to $\Omega_{cl}$ along the path $\o'$. In the configuration $I_{m'-1}$ there is either a stripe $S$ or a critical cross $C$ and then we can argue as in ${\rm a)}$.
\end{proof}

In particular, this latter lemma implies that $\SS \subseteq \Omega_{cl}$. We remark that $\SS \neq \Omega_{cl}$. Indeed there exist independent sets $I \in \Omega_{cl}$ which have a cluster with a contour larger than $4 \gs$ and therefore do not belong to $\SS$. The next proposition gives two crucial properties of the set $\SS$, following the general method proposed in~\cite[p.~614]{MNOS04}.

\begin{proposition} \label{prop:S}
The set $\SS$ satisfies the following two properties:
\begin{itemize}
\item[{\rm a)}] $\SS$ is a connected set, $\ev \in \SS$ and $\ov \notin \SS$;
\item[{\rm b)}] There exists a path $\o^*: \ev \to \ov$ such that the maximum of $\D(\cdot)$ is reached in $\cF(\pap \SS )$, namely
\eqn{\label{eq:propb} \arg \max_{\o^*} \D \cap \cF (\pap \SS) \neq \emptyset.}
\end{itemize}
\end{proposition}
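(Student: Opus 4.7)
The plan is to derive both parts from the definitions of $\SS$, $\pap\SS$, $\cF(\cdot)$, and $\phi(\cdot,\cdot)$, using Theorem~\ref{thm3} (which gives $\G=\gs+1$) as the only non-trivial input. For part (a), $\ev\in\SS$ holds trivially because the singleton path $(\ev)$ witnesses $\phi(\ev,\ev)=0\leq\gs$, while $\ov\notin\SS$ follows immediately from Theorem~\ref{thm3}: $\phi(\ev,\ov)=\G=\gs+1>\gs$. To prove connectedness, I would fix an arbitrary $I\in\SS$ together with an optimal path $\o:\ev\to I$ realizing $\max_{J\in\o}\D(J)\leq\gs$; for any intermediate $J$ on $\o$, the initial segment of $\o$ from $\ev$ to $J$ certifies $\phi(\ev,J)\leq\gs$, so $J\in\SS$. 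Hence the whole path lies in $\SS$, and every element of $\SS$ is connected to $\ev$ within $\SS$.

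For part (b), my candidate $\o^*$ is any path from $\ev$ to $\ov$ realizing $\phi(\ev,\ov)=\G=\gs+1$. Along such $\o^*$, let $I^*$ denote the first state not in $\SS$; this state exists because $\ev\in\SS$ and $\ov\notin\SS$. Its predecessor $J^*$ lies in $\SS$, and the bound $\D(J^*)\leq\phi(\ev,J^*)\leq\gs$ combined with the single-flip identity $\D(I^*)=\D(J^*)\pm 1$ yields $\D(I^*)\leq\gs+1$. Conversely, any $I\in\pap\SS$ must satisfy $\D(I)\geq\gs+1$: taking a neighbor $J\in\SS$, appending $I$ to an optimal path $\ev\to J$ produces a path $\ev\to I$ of maximum height $\max(\gs,\D(I))$, so $\phi(\ev,I)\leq\max(\gs,\D(I))$, and the strict inequality $\phi(\ev,I)>\gs$ forces $\D(I)\geq\gs+1$. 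Combining both inequalities gives $\D(I^*)=\gs+1=\D(\cF(\pap\SS))$, so $I^*\in\cF(\pap\SS)$. Since the maximum of $\D$ along the optimal path $\o^*$ equals $\G=\gs+1=\D(I^*)$, the state $I^*$ lies in $\arg\max_{\o^*}\D\cap\cF(\pap\SS)$, which is precisely~\eqref{eq:propb}.

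The only real technical step is Theorem~\ref{thm3} itself; once $\G=\gs+1$ is available, the proposition reduces to routine bookkeeping with the sub-path inequality $\phi(\ev,J)\leq\max_{J'\in\o}\D(J')$ for any $J$ on a path $\o$ starting at $\ev$, together with the single-flip identity $\D(I)-\D(J)=\pm 1$ for neighbors. In particular, no further geometric analysis of clusters, crosses, or stripes is needed beyond what was already invested in Theorem~\ref{thm3}, and the two parts of the proposition can be proved in essentially a page of definition-chasing.
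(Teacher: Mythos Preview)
Your argument is internally correct, but it is \emph{circular} in the context of this paper. You invoke Theorem~\ref{thm3} ($\G=\gs+1$) as a black box, yet in the paper Theorem~\ref{thm3} is \emph{derived from} Proposition~\ref{prop:S}: the proof in Subsection~\ref{proofthm3} uses part~(a) to obtain the lower bound $\phi(\ev,\ov)\geq\D(\cF(\pap\SS))$ and part~(b) to obtain the matching upper bound. So treating Theorem~\ref{thm3} as ``the only non-trivial input'' to Proposition~\ref{prop:S} assumes exactly what you are asked to establish. Your closing remark that ``no further geometric analysis of clusters, crosses, or stripes is needed beyond what was already invested in Theorem~\ref{thm3}'' is the tell: in this paper that geometric analysis is invested precisely \emph{here}, in Proposition~\ref{prop:S}, via Lemmas~\ref{lem:contourstripecc} and~\ref{lem:nosnocc}, and Theorem~\ref{thm3} then follows in a few lines.

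Concretely, the paper proves $\ov\notin\SS$ not by quoting $\G=\gs+1$ but by observing that any path $\ev\to\ov$ must pass through a configuration containing a stripe or a critical cross, and then applying the contour-length bounds of Lemma~\ref{lem:contourstripecc} (via Lemma~\ref{lem:nosnocc}) to force $\max_{I\in\o}\D(I)>\gs$. For part~(b) the paper builds an explicit reference path $\o^*$ (grow a linear cluster to $\gs/2-1$ odd vertices, remove two even neighbors, add the missing odd vertex to form a stripe, then expand column by column) and checks by hand that its maximum efficiency gap equals $\gs+1$ and is attained at a state of $\cF(\pap\SS)$. Your definition-chasing with optimal paths and the single-flip identity is valid bookkeeping, but it cannot replace these geometric inputs; without an independent proof of Theorem~\ref{thm3}, your proposal does not stand on its own.
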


\begin{proof}
{\rm a)} Clearly $\ev \in \SS$ and $\SS$ is connected by construction. We claim that every path $\o: \ev \to \ov$ satisfies $\max_{I \in \omega} \D(I) > \gs$, which implies that $\ov \notin \SS$. Indeed $\ev \in \Omega_{cl}$, $\ov \in \Omega_{cr}\setminus \Omega_{cc}$ and therefore along any path $\o: \ev \to \ov$ there must be a configuration showing a stripe or a critical cross and Lemma~\ref{lem:nosnocc} proves our claim.

{\rm b)} The reference path $\o^*$ can be constructed as follows: Starting from $\ev$, we gradually create a linear cluster, until it contains $\gs/2-1$ odd active nodes. This configuration, say $I$, still belongs to $\SS$, since its only cluster has a contour of length $4\gs-4$ and hence $\D(I)=\gs-1$. In order to add the odd vertex $v$ which is missing to create a stripe, we first need to remove sequentially the two even vertices $v_1,v_2$ in  $I \cap \partial v$. Let us call $I_1$ and $I_2$ the configurations corresponding to these two intermediate steps and $I_3$ the configuration where the stripe finally appears, adding the vertex $v$.
By construction, $I_1 \in \SS$, with $\D(I)=\gs$, while $\D(I_2)= \gs+1$ and hence $I_2 \in \pap \SS$. From $I_3$ it is possible to reach the configuration $\ov$ visiting only configurations $J\in\Omega$ such that $\D(J)\leq \gs+1$, just gradually expanding the stripe column by column. Therefore $I_2 \in \arg\max_{\o^*} \D \cap \cF(\pap \SS)$.
\end{proof}

\subsection{Proof of Theorem~\ref{thm3}}
\label{proofthm3}
Proposition~\ref{prop:S}(a) implies that the process should exit from the set $\SS$ to reach the state $\ov$ and so it must visit a configuration $I \in \pap \SS$. Therefore
$\phi(\ev,\ov) \geq \D(\cF(\pap \SS))$.
Moreover the existence of a path $\o^*: \ev \to \ov$ such that the maximum of $\D(\cdot)$ is reached in $\cF(\pap \SS )$, guaranteed by Proposition~\ref{prop:S}(b), gives
$\phi(\ev,\ov) \leq \max_{I \in \o^* } \D(I)$,
and by~\eqref{eq:propb} the two bounds coincide and are equal to $\D(\cF(\pap \SS))$. The definition of $\SS$ yields that any $I \in \cF(\pap \SS)$ is such that $\D(I)=\gs+1$ and the conclusion follows.

\section{Proof of Theorem~\ref{thm1} (Transition time between dominant states)}
\label{sec:thm31}
In this section we prove Theorem~\ref{thm1}, exploiting the communication height $\G$, together with a well-known result about exit times from cycles, namely Theorem 6.23 in~\cite{OV05}. In order to do so, we will consider the uniformized discrete-time version of the activity process. In more detail, we construct a discrete-time Markov chain $\{\tilde{X}(t) \}_{t\in \N}$ starting from the continuous-time Markov process $\{X(t)\}_{t \geq 0}$ by means of uniformization at rate $q_{{\rm max}}=\gs^2 \xi$, where $\xi=\max \{p \mu,\nu\}$. The transition probabilities of the new Markov chain are as follows:
\[
p(I,J) =
\left\{\begin{array}{ll}
\nu/q_{{\rm max}},  & \text{ if } |I \triangle J|=1, I \subset J,\\ 
p \mu /q_{{\rm max}}, & \text{ if } |I \triangle J|=1, J \subset I,\\  
1 - \sum_{I'\neq I} p(I,I'), & \text{ if } I=J, \\
0, & \text{ otherwise.}
\end{array}\right.
\]
Since we are interested in the regime $\s \to \infty$, we will assume that $\s \geq 1$, so that $\xi=\nu$ and the above transition probabilities may be written in the form
\[
p(I,J) =
\begin{cases}
c(I,J) \s^{-[H(J)-H(I)]^+}, & \text{ if } I\neq J,\\
1 - \sum_{I' \neq I} p(I,I'), & \text{ if } I=J,
\end{cases}
\]
where $H: \Omega \to \mathbb R$ is the \textit{Hamiltonian} defined as $H(I):=-|I|$ and $c$ is the connectivity function $c: \Omega^2 \setminus\{(I,I) : I \in \Omega \} \to [0,1]$ defined as
\[
c(I,J) =
\begin{cases}
1/\gs^2, & \text{ if } |I \triangle J|=1,\\
0, & \text{ otherwise.}
\end{cases}
\]
The process $\{\tilde{X}(t) \}_{t\in \N}$ is a reversible Markov chain which, taking $\s = e^\b$, satisfies condition M in~\cite[p.~336]{OV05} and thus fits in that framework. Moreover, its stationary distribution is the same as that of the original continuous-time process $\{X(t)\}_{t \geq 0}$, see~\eqref{eq:pih}, and can be rewritten as
\[
\pi(I)=\frac{e^{-\b H(I)}}{\sum_{J \in \Omega} e^{-\b H(J)}}, \quad I \in \Omega.
\]
Let $(Y_n)_{n \in \N}$ be a sequence of i.i.d.~exponential random variables with mean $1/q_{{\rm max}}$. If $\tilde{X}(0)=X(0)$ and $n(t):= \sup \{ m : \sum_{n=1}^m Y_n \leq t\}$, then $ \{X(t)\}_{t\geq 0}\ed\{\tilde{X}(n(t))\}_{t\geq 0}$.

For $I \in \Omega$ and $A \subseteq \Omega$, let $\t^{I}_{A}(\s):=\inf \{t> 0 : \tilde{X}(t) \in A\}$ be the first hitting time of the set $A$ for the process $\tilde{X}(t)$ starting in $I$ at $t=0$ and let $T_{I\to A}(\s):=\inf \{t> 0 : X(t) \in A\}$ be its continuous-time counterpart, namely the first hitting time of the set $S$ for the process $X(t)$ starting in $I$ at $t=0$. These hitting times are closely related:
\[
T_{I \to A} \ed \sum_{n=1}^{\t^I_A} Y_n.
\]
In particular,
\eqn{\label{eq:Ectdt}
\E T_{I \to A} (\s) = \frac{1}{q_{{\rm max}}} \E \t^I_A (\s).
}

Let us now focus on the Markov chain $\{\tilde{X}(t) \}_{t\in \N}$. As shown in Proposition~\ref{prop:S}, this process must exit from the set $\SS$ in order to make the transition $\ev \to \ov$. In particular it must hit the set $\pap \SS$, the external boundary of $\SS$, before reaching $\ev$. Therefore $ \t_{\ov}^{\ev} \stl \t^{\ev}_{\partial \SS}$ and hence
\eqn{\label{eq:tailineq}
\pr{ \t_{\ov}^{\ev} \geq e^{\b (\G-\e)} }  \geq \pr{ \t^{\ev}_{\pap \SS} \geq e^{\b (\G-\e)} }.
}
Theorem 6.23 in~\cite{OV05} says that for any $\e>0$, there exists $k(\e)>0$ and $\b_0 >0$, such that for all $\b \geq \b_0$ and for any $I \in \SS$,
\eqn{\label{eq:exitfromS}
\pr{\t^{I}_{\pap \SS} < e^{\b (D(\SS) - \e)}} \leq e^{-k(\e) \b},
}
where $D(\SS)$ is the depth of the cycle $\SS$, which is equal to $\G$ by construction. Equations~\eqref{eq:tailineq} and~\eqref{eq:exitfromS} imply that for every $\e >0$ there exists $k(\e)>0$ and $\b_0 >0$, such that for all $\b \geq \b_0$
\eqn{\nonumber
\pr{ \t_{\ov}^{\ev} \geq e^{\b (\G-\e)} } \geq 1-e^{- \b k(\e)}.
}
In particular, $\exists \, \b_1 > \b_0$ such that $ \pr{ \t_{\ov}^{\ev} \geq e^{\b (\G-\e)} } \geq 1/2$. Thus
\eqn{\nonumber
\E \t_{\ov}^{\ev} \geq \E \left( \t_{\ov}^{\ev} ~|~ \t_{\ov}^{\ev} \geq e^{\b (\G-\e)} \right ) \pr{ \t_{\ov}^{\ev} \geq e^{\b (\G-\e)} } \geq \frac{1}{2} e^{\b (\G-\e)}.
}
Therefore, for $\b > \b_1$, 
\[\log \E \t_{\ov}^{\ev} \geq \b (\G-\e) \log 2^{-1}=  \b (\G-\e) - \log 2,\]
and hence
\eqn{\nonumber
\frac{1}{\b} \log \E \t_{\ov}^{\ev} \geq \G-\e - \frac{\log 2}{\b},
}
or equivalently, by replacing $\b=\log \s$, for $\s$ sufficiently large,
\eqn{\nonumber
\frac{\log \E \t_{\ov}^{\ev}}{\log \s} \geq \G-\e - \frac{\log 2}{\log \s}.
}
Taking the liminf for $\s \to \infty$ on both sides, we get
\eqn{\nonumber
\liminfnu \frac{ \log \E \t_{\ov}^{\ev} }{\log \s} \geq \G-\e,
}
and, since $\e$ is arbitrary, it follows that
\eqn{\label{eq:Etd}
\liminfnu \frac{\log \E \t_{\ov}^{\ev}}{\log \s} \geq \G.
}
Using the fact that $q_{{\rm max}}=\gs^2 \nu=\gs^2 \s p \mu$, \eqref{eq:Ectdt} implies that 
\[
\log \E \tyx = \log \E \t_{\ov}^{\ev} - \log (\gs^2)- \log \nu,
\]
and hence
\eqan{
\liminfnu \frac{\log \E \tyx}{\log \s} &= \liminfnu \frac{\log \E \t_{\ov}^{\ev} - \log (\gs^2) - \log \s - \log p -\log \mu}{\log \s} \nonumber\\
&=\liminfnu \frac{\log \E \t_{\ov}^{\ev}}{\log \s} - 1 + \alpha \nonumber\\
&\stackrel{\eqref{eq:Etd}}{\geq} \G - 1 + \alpha,\nonumber
}
with $\alpha=\liminfnu \frac{\log p}{\log p - \log \nu}$. The conclusion then follows from Theorem~\ref{thm3}, which states that $\G=\gs+1$.

\section{Concluding remarks}
\label{concl}

We have obtained delay bounds for random-access grid networks, 
which show that the delays grow dramatically with both the load
and the dimension of the network.
For transparency, we have focused on symmetric grid networks,
but the proof techniques and delay bounds are expected to extend
to a far broader range of scenarios.

In the present paper we have assumed the activation rate~$\nu$
and back-off probability~$p$ to be static parameters.
A natural question is whether dynamic activation rates and back-off
probabilities can potentially achieve better delay performance.
Specifically, various algorithms have recently been proposed where
the back-off probability at each node is dynamically adapted over
time as a (decreasing) function $p(q)$ of the instantaneous queue-length~$q$ \cite{JSSW10,RSS09,SS12,SST11}.
Remarkably, for suitable choices of the function $p(\cdot)$ such
algorithms are guaranteed to provide maximum stability in arbitrary
topologies, without any explicit knowledge of the arrival rates.
However, lower bounds in~\cite{BBvL13} demonstrate that for such
choices of $p(\cdot)$ the delays are of the order
$\exp\big(\frac{1}{2 (1 - \rho)}\big)$, growing even faster with the load than the
$\left(1 - \rho \right)^{-L}$ scaling we obtained for
the $L \times L$ toric grid.

In the specific case of an $L \times L$ grid, the fluid-limit
results in~\cite{GBW13} suggest that maximum stability would
actually be maintained as long as the function $p(q)$ decays
no faster than $q^{- 2 / L^2}$.
The lower bounds in~\cite{BBvL13} then indicate that the delays grow
as $\left(1 - \rho \right)^{-L^2 / 2}$, which is still
faster than the bounds we obtained for fixed back-off probabilities.
In order for the lower bounds for queue-based back-off probabilities
in~\cite{BBvL13} to match the lower bounds in the present paper for fixed
back-off probabilities, the function $p(q)$ should decay as
$q^{- 1/ L}$, but it is not clear whether maximum stability would remain
guaranteed in that case.

\section*{Acknowledgment}
This work was financially supported by a TOP Grant from The Netherlands Organization for Scientific Research (NWO) and an ERC Starting Grant. The authors gratefully acknowledge helpful discussions with Patrick Thiran, Reint den Toonder and Peter van de Ven during the early stages of this work.

\bibliographystyle{elsarticle-num}
\bibliography{arxiv}
\end{document}